\newtheorem{thm}{Theorem}[section]
\newtheorem{lmm}[thm]{Lemma}
\begin{document}

\begin{abstract}
    The family of cycle completable graphs has several cryptomorphic descriptions, the equivalence of which has heretofore been proven by a laborious implication-cycle that detours through a motivating matrix completion problem. We give a concise proof, partially by introducing a new characterization. Then we generalize this family to ``$k$-quasichordal'' graphs, with three natural characterizations.
    \end{abstract}
    \begin{keyword}
        cycle completable  \sep forbidden induced subgraphs \sep k-quasichordal
      \end{keyword}

\begin{frontmatter}
\title{Characterizing and generalizing 
cycle completable graphs}
\author[0]{Maria Chudnovsky}
\author[0]{Ian M.J. McInnis\corref{cor1}}

\affiliation[0]{organization={Department of Mathematics, Princeton University},
                addressline
                city={Princeton},
                postcode={08544}, 
                state={NJ},
                country={USA}}
\cortext[cor1]{Corresponding author. imj@alumni.princeton.edu}

\end{frontmatter}

\section{Introduction}\label{Introduction}

A class of graphs called \textit{cycle completable} graphs has arisen in various matrix completion problems \cite{BJL}, especially those that pertain to geometry \cite{JJK, McI}. 
There are several cryptomorphic characterizations; Theorem \ref{originalequivalence}, which we prove in Section \ref{Characterizing}, states their equivalence. We preserve their numbering in \cite{JM} but adapt their wording. Condition $(\alpha)$ is new. See Section \ref{Definitions} for definitions.

\begin{thm}\label{originalequivalence}
    The following are equivalent:
\end{thm}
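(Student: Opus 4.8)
The plan is to abandon the original implication-cycle---long precisely because it routes through the matrix completion problem---in favor of a hub-and-spoke scheme centered on the new condition $(\alpha)$. The keywords advertise a forbidden-induced-subgraph characterization, so I expect $(\alpha)$ to be of this purely combinatorial form; that makes it an ideal hub, since it can be compared directly with each remaining condition by graph-theoretic means, and the matrix-theoretic detour is then needed for at most one spoke rather than being threaded through the entire argument. I would therefore first dispatch the cheap equivalences among the combinatorial conditions and $(\alpha)$, and only afterward connect $(\alpha)$ to the completion-flavored condition.

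For the combinatorial spokes, each equivalence with $(\alpha)$ splits into two directions. For ``other condition $\Rightarrow (\alpha)$'' I would argue by contraposition: assuming $G$ contains one of the forbidden induced subgraphs, I exhibit explicitly the chordless cycle or attachment configuration witnessing the failure of the other condition---a finite, local check on the forbidden patterns. For ``$(\alpha) \Rightarrow$ other condition'' I would induct on $|V(G)|$, deleting a simplicial-type vertex or splitting along a clique separator; since $(\alpha)$ is hereditary, the smaller pieces still satisfy it, the induction hypothesis supplies the structural decomposition the other condition demands, and it remains only to check that reattaching the deleted vertex along a clique preserves that structure. The base cases are cliques and small chordal pieces.

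The main obstacle is the lone spoke linking $(\alpha)$ to the condition inherited from the matrix completion setting. Here I want to show that the forbidden configurations of $(\alpha)$ are exactly the minimal obstructions to completability, without re-deriving the completion theory. I expect to produce, for each forbidden subgraph, an explicit partial matrix supported on it that admits no completion---a combinatorial certificate of non-completability---and, conversely, to show that when $(\alpha)$ holds the clique-separator decomposition used above assembles a global completion out of local ones. Making this assembly step go through cleanly, so that the combinatorial decomposition genuinely furnishes the analytic completion, is the delicate point on which the concision of the whole argument depends.
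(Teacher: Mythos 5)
There is a genuine misreading at the heart of your plan: none of the six conditions in the theorem is matrix-theoretic. The paper deliberately drops the matricial condition $(0)$ from the equivalence and proves only the graph-theoretic ones; condition $(3)$, which you appear to be treating as ``the condition inherited from the matrix completion setting,'' is the purely combinatorial statement that $G$ has a chordal supergraph whose $K_4$'s all lie in $G$. Your proposed ``delicate point'' --- building partial matrices with no completion on each forbidden subgraph and assembling local completions into a global one --- is therefore proving something not asserted, and it reintroduces exactly the analytic machinery the paper is written to avoid. The actual link between $(3)$ and the rest is a tree-decomposition argument (Lemma \ref{3kimpliesAk}): a clique tree of the chordal supergraph whose large bags are forced to be cliques of $G$ yields either small treewidth, a clique, or a cut-clique, giving $(3)\rightarrow(A)$; the direction $(A)\rightarrow(B)\rightarrow(3)$ is imported from the earlier literature. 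No completion certificates are needed anywhere.

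On the combinatorial side your hub-and-spoke outline is close in spirit to the paper (which indeed routes several implications through $(\alpha)$), but it is missing the two ingredients that carry all the weight, and a generic ``clique-separator induction plus local check'' will not substitute for them. First, $(\neg 2)\rightarrow(\neg\alpha)$ rests on a nontrivial external lemma of L\'ev\^eque--Maffray--Trotignon: a graph with a $K_4$ minor but no induced subgraph equal to or built from $K_4$ contains an induced prism, wheel, or $K_{3,3}$ --- this is precisely why $\mathcal{F}$ is the right forbidden family, and it is not a ``finite, local check.'' Second, $(\neg A)\rightarrow(\neg 2)$ requires Lemma \ref{Onlyhardlemma}: a graph that is not series-parallel, not a clique, and has no cut-clique contains an \emph{induced} subgraph with a $K_4$ minor but no $K_4$ subgraph; producing that induced subgraph takes a careful choice of shortest paths off a maximum clique, and nothing in your sketch constructs it. Finally, your reattachment step for ``$(\alpha)\Rightarrow$ other'' needs the specific fact that every member of $\mathcal{F}$ is connected with no cut-clique (so a forbidden subgraph cannot straddle a clique cutset), which is the content of Lemma \ref{Mostboringlemma}; without naming it the induction does not close.
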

\begin{enumerate}
    \item[(1)] No induced subgraph of $G$ is equal to or built from $\widehat{W}_4$ or $W_n$ for $n\geq 5$.
    \item[\text{(2)}] Every induced subgraph of $G$ is series-parallel or contains a $K_4$ subgraph.
    \item[\text{($\alpha$)}] $G$ is $\mathcal{F}$-free.
    \item[\text{(A)}] $G$ is a clique-sum of chordal and series-parallel graphs.
    \item[\text{(B)}] $G$ has a mixed elimination ordering.
    \item[\text{(3)}] $G$ has a chordal supergraph $H$ such that any $K_4\subseteq H$ is also a $K_4\subseteq G$. 
\end{enumerate}

Characterizations (1), (2), and (3) arose in connection with positive definite matrices in \cite{BJL}. That paper stated a matrix-theoretic condition (0) and proved $(0)\rightarrow (1)\rightarrow(2)\rightarrow(3) \rightarrow (0)$. All but a page of the implication $(2)\rightarrow(3)$ was devoted to a technical lemma that \cite{JM} used to prove $(2)\rightarrow (A)$, appending thereunto a two-paragraph proof that $(A)\rightarrow (B)\rightarrow (3)$. Both \cite{JJK} and \cite{McI} prove $(3)\rightarrow (1)$ via conditions that pertain more to metric geometry than to linear operators or quadratic forms. Besides being long, these proofs are still thoroughly matricial/numerical. The implications $(1)\rightarrow (2) \rightarrow (A) \rightarrow (B)\rightarrow (3)$ were proven by thirty-four pages of laborious (albeit graph-theoretic) casework. We seek a short, matrix-free proof that $(1),(2),(A),(B),$ and $(3)$ are equivalent. 

To that end, we introduce $(\alpha)$. The implications $(1)\rightarrow (\alpha)$ and $(A)\rightarrow (\alpha)$ are easier than $(1)\rightarrow (2)$ and $(A)\rightarrow (2)$, and $(\alpha)\rightarrow (2)$ follows from a short lemma in \cite{LMT}. Our $(2)\rightarrow (A)$ does not use $(\alpha)$ but is much shorter than before.
The original $(A)\rightarrow (B) \rightarrow (3)$ is effective; we offer no replacement. Finally, $(3)\rightarrow (A)$ and $(2)\rightarrow (1)$ are easy. Our proof could be shortened further; we have occasionally sacrificed some brevity in order to highlight interesting, supererogatory lemmata.\\

Other matrix completion problems yield a natural generalization of $(3)$ in which $K_4$ is replaced by $K_n$, where $n$ varies with the problem.\footnote{The problems pertain to families of metric spaces. The $n$ in $K_n$ pertains to a parameter of the family called the ``dullness.'' See \cite{McI} for details.}
With a little definition-craft, we can find equivalents of $(A)$ and $(B)$. The graphs thus described we call \emph{$k$-quasichordal}. Chordal graphs are the cases $k=1$; cycle completable graphs are the case $k=2$. Each characterization captures the notion that ``locally, $G$ is clique-like or partial-$k$-tree-like.'' See Section \ref{Generalizing} for the proof.
\begin{thm}\label{TheKversion}
    The following are equivalent for $k\geq 1$:
\end{thm}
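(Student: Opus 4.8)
The plan is to establish the three characterizations — call them $(A_k)$ ($G$ is a clique-sum of chordal graphs and partial $k$-trees), $(B_k)$ ($G$ has a $k$-mixed elimination ordering), and $(3_k)$ ($G$ has a chordal supergraph $H$ with no $K_{k+2}$ beyond those of $G$) — equivalent by the cycle $(A_k)\Rightarrow(B_k)\Rightarrow(3_k)\Rightarrow(A_k)$, mirroring the cycle completable case $k=2$. The original $(A)\Rightarrow(B)\Rightarrow(3)$ is purely graph-theoretic, so I expect it to survive the substitutions $K_4\rightsquigarrow K_{k+2}$, \emph{series-parallel}~$\rightsquigarrow$~\emph{partial $k$-tree}, and the two elimination moves~$\rightsquigarrow$~their generalizations. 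The backbone is the standard dictionary between elimination orderings and chordal completions: an ordering $v_1,\dots,v_n$ yields the fill-in supergraph $H$ in which each monotone neighborhood $C_i=\{v_i\}\cup N^H_{>i}(v_i)$ is a clique, and every maximal clique of $H$ is some $C_i$. I would define a $k$-mixed elimination ordering to be one in which each $C_i$ is \emph{either} a clique of $G$ (the chordal move, generalizing a simplicial vertex) \emph{or} of size at most $k+1$ (the partial-$k$-tree move, generalizing a vertex of degree at most $2$). With this definition $(B_k)\Leftrightarrow(3_k)$ is almost immediate, since the cliques of $H$ not already present in $G$ are confined to the $C_i$: the completion $H$ introduces no new $K_{k+2}$ exactly when every $C_i$ with $|C_i|\ge k+2$ is already a clique of $G$.

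For $(A_k)\Rightarrow(B_k)$ I would induct on the clique-sum tree, peeling off a leaf piece $P$ glued to the remainder along a clique $S$. Because each vertex of $P\setminus S$ has all its neighbors inside $P$, any fill-in incurred while eliminating $P\setminus S$ stays within $P$ and cannot disturb the other pieces; so it suffices to eliminate $P\setminus S$ by moves internal to $P$ and then recurse on $G$ with $P\setminus S$ deleted, the intact clique $S$ serving as the new seam. If $P$ is chordal, a simplicial vertex of $P$ outside $S$ gives a chordal move. If $P$ is a partial $k$-tree, it has treewidth at most $k$; since a clique lies in a bag of any tree-decomposition, I can root a width-$\le k$ decomposition at a bag containing $S$ and eliminate from the leaves inward, which deletes $S$ last and keeps every fill-in clique of size at most $k+1$, i.e.\ gives partial-$k$-tree moves on $P\setminus S$.

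For $(3_k)\Rightarrow(A_k)$ I would work from a clique tree $T$ of $H$. The governing dichotomy is that a maximal clique $Q$ of $H$ with $|Q|\ge k+2$ is a clique of $G$ (by the no-new-$K_{k+2}$ hypothesis), hence $G[Q]$ is complete and chordal, whereas a maximal clique with $|Q|\le k+1$ has at most $k+1$ vertices, so $G[Q]$ is vacuously a partial $k$-tree. I would group the maximal cliques of $H$ of size $\le k+1$ that are connected through $T$ into single blocks; each such block is an induced subgraph of the chordal graph $H$ built from cliques of size $\le k+1$, hence of treewidth $\le k$, so the corresponding induced subgraph of $G$ is a partial $k$-tree. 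The large cliques ($|Q|\ge k+2$) are the chordal blocks, and $G$ is recovered as the clique-sum of these blocks along the separators of $T$.

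The main obstacle I anticipate is exactly the legitimacy of this last clique-sum, because a clique-sum must be taken along a common \emph{clique of $G$}, yet a separator $S=Q\cap Q'$ of $T$ is only guaranteed to be a clique of $H$. Here the size dichotomy again saves the day but needs care. If $|S|\ge k+2$ then $S$ is a clique of $G$ directly, since any edge of $S$ completes within $S$ to a $K_{k+2}$ of $H$ and hence of $G$. If $|S|\le k+1$, then either both $Q$ and $Q'$ are small, in which case the grouping places them in a common partial-$k$-tree block and no sum along $S$ is performed, or at least one of them, say $Q$, is large, whence $S\subseteq Q$ and $Q$ is a clique of $G$, so $S$ is too. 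Verifying that the grouping thereby eliminates every separator that is a clique of $H$ but not of $G$, and that reassembling the blocks reintroduces no fill-in edge of $H$, is the delicate bookkeeping on which the implication turns; it is routine for $k=2$, and I expect it to remain so, but it is the step I would write out most carefully.
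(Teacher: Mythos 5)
Your proposal is correct, and two of its three legs coincide with the paper's: the equivalence $(B_k)\leftrightarrow(3_k)$ via the fill-in graph $G_\pi$ and perfect elimination orderings of the supergraph (the paper's Lemmas \ref{3kimpliesBk} and \ref{Bkimplies3k}, which are likewise near-definitional), and $(3_k)\rightarrow(A_k)$ via the clique tree of $H$ together with the observation that any clique of $H$ on at least $k+2$ vertices --- in particular any large bag or large separator --- is forced to be a clique of $G$ (the paper's Lemma \ref{3kimpliesAk}, which phrases the same idea as a recursion on cut-cliques rather than your one-shot grouping of small bags into blocks). Where you diverge is the remaining leg: you close the cycle with a direct $(A_k)\rightarrow(B_k)$ by peeling leaf pieces off the clique-sum tree, which is exactly the ``straightforward generalization of [JM]'s $(A)\rightarrow(B)$'' that the paper explicitly declines for novelty's sake; the paper instead proves $(A_k)\rightarrow(3_k)$ directly (Lemma \ref{Akimplies3k}) by taking clique-number-$(k{+}1)$ chordal completions of the partial-$k$-tree summands and reassembling, which is shorter than your leaf-peeling induction but reaches $(B_k)$ only by composition through $(3_k)$. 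Your route buys a self-contained elimination-ordering construction from the decomposition; the paper's buys brevity and makes $(3_k)$ the hub of all four implications. One small point worth writing carefully in your version: in the chordal-piece case of $(A_k)\rightarrow(B_k)$ you need the standard fact that a chordal graph admits a perfect elimination ordering ending with any prescribed clique $S$, and in the partial-$k$-tree case that $S$ lies in a single bag; both are true but should be cited or proved.
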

    \begin{enumerate}
        \item[\textbf{$(A_k)$}] $G$ is a  clique-sum of cliques and graphs having treewidth $\leq k$.
        \item[\textbf{$(B_k)$}] $G$ has a $k$-blended elimination ordering.
        \item[\textbf{$(3_k)$}] $G$ has a $(k+1)$-clique chordal supergraph. 
\end{enumerate}
Proving $(3_k)\rightarrow (A_k)$ is no harder than $(3)\rightarrow (A)$; Lemma \ref{3kimpliesAk} entails it. 
The original $(A)\rightarrow (B) \rightarrow (3)$ generalizes succinctly but dully, so for the sake of novelty and nicer lemmata, we prove $(A_k)\rightarrow (3_k)$ and $(B_k)\leftrightarrow (3_k)$. 

Generalizing $(1),(2)$, or $(\alpha)$ would consist of giving an induced subgraph obstruction set for $k$-quasichordal graphs. This would be quite useful, since forbidden induced subgraphs arise naturally in the proof structure of the motivating matricial problems \cite{BJL, JJK, McI}. Unfortunately, an obstruction set for $k$-quasichordal graphs is contained in (and includes most members of) an obstruction set for graphs of treewidth $\leq k$, which have proven quite difficult to find. See \cite{ACV} and its sequels. 

\section{Definitions}\label{Definitions}
Throughout, graphs are undirected and simple. We ignore the distinction between isomorphism of graphs and their equality.

The complete graph on $p$ vertices is denoted $K_p$. The cycle on $p$ vertices is $C_p$. The complete bipartite graph with $p$ vertices on one side and $q$ on the other is $K_{p,q}$. If vertices $u,v$ are adjacent, we write $u \sim v$; otherwise we write $u\not\sim v$. The vertices of G are $V(G)$; its edges are $E(G)$. Denote by $N(v)$ the neighbors of v. 
If $H$ can be obtained from $G$ by deleting vertices, we call $H$ an \emph{induced subgraph} of $G$ and write $H=G|_{V(H)}$. A $p$-\textit{clique} is an induced $K_p$ subgraph. If $H$ can be obtained from $G$ by deleting vertices, deleting edges, and contracting edges, we call $H$ a \textit{minor} of $G$. (Recall that a graph is series-parallel if and only if it has no $K_4$ minor.)

A \emph{tree} is a connected, nonempty graph with no cycles.
For a graph $G$, a \emph{tree decomposition} is a pair $(T,f)$ consisting of a tree $T$ and a function $f: V(G) \rightarrow 2^{V(T)}$, satisfying the following properties:
\begin{itemize}
    \item For any $x\in V(G)$, $T|_{f(x)}$ is a tree.
    \item If $xy\in E(G)$, $f(x)\cap f(y)\neq \emptyset$.
\end{itemize}
For a vertex $v\in V(T)$, $f^{-1}(v)$ is the \emph{bag} associated to $v$. The \emph{width} of a tree decomposition $(T,f)$ is equal to $\max_{v\in V(T)}|f^{-1}(v)|-1$. The \emph{treewidth} of a graph $G$ is the minimum width of a tree decomposition of $G$. If $G$ has treewidth $\leq k$, we say that it is a \emph{partial $k$-tree.} (Treewidth's naturality and importance have been independently discovered in dynamic programming \cite{BB}, graph functions \cite{Hal}, and graph minor theory \cite{RS}.)

A \emph{prism} consists of two triangles on distinct vertices $u_1,u_2,u_3,v_1,v_2,v_3$, plus three pairwise disjoint paths: one from $u_i$ to $v_i$ for each $i\in \{1,2,3\}$. 
A \emph{wheel} consists of a $C_k$ for some $k\geq 4$ and an additional vertex $v$ not in $C_k$ adjacent to at least three vertices of the $C_k$. We denote the minimal wheel by $\widehat{W}_4$. If $v$ is adjacent to each of the $k$ other vertices, we call it a \emph{universal wheel} and denote it $W_n$, where $n=k+1$. 

A \emph{vertex partition} of $G$ is a graph $G'$ in which a vertex $v$ of $G$ is replaced by two adjacent vertices $x,y$, such that $(N(x), N(y))$ partitions $N(v)$. 
We say that $G_2$ is \emph{built from} $G_1$ just in case $G_2$ is obtained from $G_1$ by a (finite) sequence of vertex partitions.\footnote{In earlier references \cite{BJL, JJK, JM} the sequence was required to be nonempty. We find this awkward but will use the phrase ``equal to or built from'' for backwards compatibility.} 

The family $\mathcal{F}$ includes $K_{3,3}$, wheels, prisms, and graphs equal to or built from $\widehat{W}_4$. If no induced subgraph $H\subseteq G$ is in $\mathcal{F}$, we say that $G$ is \emph{$\mathcal{F}$-free.} The diagram below depicts members of $\mathcal{F}$. The dashed lines indicate paths of arbitrary length (including single edges), and the dotted lines indicate single edges that may be present or not.

\begin{figure}[!htbp]
	\centering
	\resizebox{.75\textwidth}{!}{%
	\begin{circuitikz}
	\tikzstyle{every node}=[font=\LARGE]
	
	\draw  (2.25,9.5) circle (0cm);
	\draw (0,11) to[short, -*] (0,11);
	\draw (-2.5,11) to[short, -*] (-2.5,11);
	\draw (-2.5,12.25) to[short, -*] (-2.5,12.25);
	\draw (-2.5,9.75) to[short, -*] (-2.5,9.75);
	\draw (0,12.25) to[short, -*] (0,12.25);
	\draw (0,9.75) to[short, -*] (0,9.75);
	\draw [short] (-2.5,9.75) -- (0,11);
	\draw [short] (-2.5,9.75) -- (0,9.75);
	\draw [short] (-2.5,9.75) -- (0,12.25);
	\draw [short] (0,9.75) -- (-2.5,11);
	\draw [short] (0,9.75) -- (-2.5,12.25);
	\draw [short] (0,11) -- (-2.5,12.25);
	\draw [short] (0,11) -- (-2.5,11);
	\draw [short] (0,12.25) -- (-2.5,11);
	\draw [short] (-2.5,12.25) -- (0,12.25);
	\draw (3.75,12.25) to[short, -*] (3.75,12.25);
	\draw (3.75,9.75) to[short, -*] (3.75,9.75);
	\draw (5.75,11) to[short, -*] (5.75,11);
	\draw (11.25,9.75) to[short, -*] (11.25,9.75);
	\draw (9.25,11) to[short, -*] (9.25,11);
	\draw (11.25,12.25) to[short, -*] (11.25,12.25);
	\draw [short] (11.25,9.75) -- (11.25,12.25);
	\draw [short] (11.25,12.25) -- (9.25,11);
	\draw [short] (9.25,11) -- (11.25,9.75);
	\draw [short] (3.75,9.75) -- (3.75,12.25);
	\draw [short] (3.75,12.25) -- (5.75,11);
	\draw [short] (5.75,11) -- (3.75,9.75);
	\draw [dashed] (3.75,9.75) -- (11.25,9.75);
	\draw [dashed] (9.25,11) -- (5.75,11);
	\draw [dashed] (3.75,12.25) -- (11.25,12.25);
	\draw (-2.5,6) to[short, -*] (-2.5,6);
	\draw (0,3.5) to[short, -*] (0,3.5);
	\draw (11.25,6) to[short, -*] (11.25,6);
	\draw (8.75,8.5) to[short, -*] (8.75,8.5);
	\draw (6.25,6) to[short, -*] (6.25,6);
	\draw (8.75,3.5) to[short, -*] (8.75,3.5);
	\draw (0,6) to[short, -*] (0,6);
	\draw (0,8.5) to[short, -*] (0,8.5);
	\draw [dashed] (6.25,6) -- (8.75,8.5);
	\draw [dashed] (8.75,8.5) -- (11.25,6);
	\draw [dashed] (11.25,6) -- (8.75,3.5);
	\draw [dashed] (8.75,3.5) -- (6.25,6);
	\draw (8.75,6) to[short, -*] (8.75,6);
	\draw [dashed] (0,6) -- (0,8.5);
	\draw [dashed] (0,8.5) -- (-2.5,6);
	\draw [dashed] (-2.5,6) -- (-0.25,6);
	\draw [dashed] (0,6) -- (2.5,6);
	\draw [dashed] (2.5,6) -- (0,8.5);
	\draw (2.5,6) to[short, -*] (2.5,6);
	\draw [dashed] (-2.5,6) -- (0,3.5);
	\draw [dashed] (2.5,6) -- (0,3.5);
	\draw [short] (6.25,6) -- (8.75,6);
	\draw [short] (8.75,6) -- (8.75,8.5);
	\draw [short] (8.75,6) -- (11.25,6);
	\draw [dotted] (8.75,6) -- (8.75,3.5);
	\draw [dotted] (8.75,6) -- (9.5,4.25);
	\draw [dotted] (8.75,6) -- (9.75,4.75);
	\draw [dotted] (8.75,6) -- (10.25,5.25);
	\draw [dotted] (8.75,6) -- (8.25,4.25);
	\draw [dotted] (8.75,6) -- (7.75,4.75);
	\draw [dotted] (8.75,6) -- (7.5,5.75);
	\draw [dotted] (8.75,6) -- (10.5,6.5);
	\draw [dotted] (9,6) -- (9.5,7);
	\draw [dotted] (8.75,6) -- (9.25,7.5);
	\draw [dotted] (8.75,6) -- (8.25,7.5);
	\draw [dotted] (8.75,6) -- (8,7);
	\draw [dotted] (8.75,6) -- (7.25,6.75);
	\end{circuitikz}
	}%
	
	\label{fig:my_label}
	\end{figure}

Given two graphs $G_1$ and $G_2$ and cliques $K_p\subseteq G_1$, $K_p\subseteq G_2$, a graph $G$ obtained by bijectively identifying the vertices of these cliques is called a \emph{clique-sum} of $G_1$ and $G_2$. If a graph $G'$ can be formed by clique-sums among $G_1,...,G_n$, we say that $G'$ is a clique-sum of $G_1,...,G_n$. (Some authors say ``sequential clique-sum.'') A list of such $G_i$ is said to be a \emph{cut-clique decomposition} of $G'$; a \emph{cut-clique} of a graph $G$ is a clique $K_p\subseteq G$ such that $G\backslash K_p$ has more components than $G$. 
  
We say that $G'$ is a \emph{$k$-clique chordal supergraph} of $G$ if $V(G)=V(G')$, $E(G)\subseteq E(G')$, $G'$ is chordal, and $G'$ contains no $(k+1)$-cliques not contained by $G$. (Recall that the treewidth of a graph $G$ is equal to $\min \omega(G')-1$, $\omega$ denotes the size of the largest clique, and where the minimum is taken over all chordal supergraphs $G'\supseteq G$.)

For a graph $G$, let $\pi=(v_1,...,v_{|V(G)|})$ be an ordering of the vertices of $G$. 
Denote by $G_{\pi}$ the \emph{fill-in graph of $G$ with respect to $\pi$}. It is constructed as follows. Start with $G$. For each $i\in \{1,...,|V(G)|\}$, consider the vertex $v_i$. For every $j,k\in \{i+1,...,|V(G)|\}$, if $j\neq k$ and $v_j\sim v_i \sim v_k \not \sim v_j$, add an edge between $v_j$ and $v_k$. Let $N^+_\pi(v_i)$ denote the higher-numbered neighbors of $v_i$ in $G_\pi$.  
Fix an integer $p$, and let $\pi$ be a vertex ordering such that for every $v_i$, either 
\begin{enumerate}
    \item[(i)] $N^+_\pi(v_i)$ is complete in $G$, or  
    \item[(ii)] $|N^+_\pi(v_i)|\leq p$. 
\end{enumerate}
Then we call $\pi$ a \emph{$p$-blended elimination ordering.}

\section{Characterizing}\label{Characterizing}
\begin{lmm}\label{Fiswheelfounded}
    Prisms and $K_{3,3}$ are built from $W_{5}$. Wheels are equal to or built from $\widehat{W}_4$ or $W_k$ for $k\geq 5$. 
\end{lmm}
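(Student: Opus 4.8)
The plan is to restate \emph{built from} as a statement about edge contractions. A vertex partition replaces a vertex $v$ by two adjacent vertices $x,y$ whose neighborhoods partition $N(v)$, and its inverse is the contraction of the edge $xy$; this contraction produces a simple graph with the correct neighborhood exactly when $x$ and $y$ have no common neighbor (otherwise the two ``sides'' of the partition would overlap, or a parallel edge would appear). Thus to prove that $G_2$ is built from $G_1$ it suffices to contract $G_2$ down to $G_1$ through a sequence of edges, each joining two vertices with no common neighbor; reading the sequence backwards yields the required vertex partitions. I will produce such a contraction sequence in each case, so that each target graph is the \emph{source} from which the larger graph is built.

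For $K_{3,3}$, with parts $\{a_1,a_2,a_3\}$ and $\{b_1,b_2,b_3\}$, any edge $a_ib_j$ joins vertices with disjoint (hence common-neighbor-free) neighborhoods, so I may contract $a_3b_3$ to a single vertex $h$. The vertex $h$ becomes adjacent to $a_1,a_2,b_1,b_2$, which in turn induce the four-cycle $a_1b_1a_2b_2$; this is precisely $W_5$. For a prism I first dispose of the connecting paths: each interior path vertex has degree two, and its two path-neighbors share no common neighbor, so I can absorb it into a neighbor by a valid contraction, reducing every path to a single edge and leaving the triangular prism on $u_1,u_2,u_3,v_1,v_2,v_3$. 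Contracting the rung $u_3v_3$ (again disjoint neighborhoods) to a vertex $h$ leaves $h$ adjacent to $u_1,u_2,v_1,v_2$, which induce the four-cycle $u_1u_2v_2v_1$; this is again $W_5$.

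For a wheel, write it as a cycle $C_k$ together with a hub $h$ adjacent to a set $S$ of cycle vertices with $|S|\ge 3$. The key adjacency fact is that two consecutive cycle vertices have no common neighbor unless both lie in $S$ (their only possible common neighbor is $h$); consequently any cycle vertex outside $S$ can be absorbed into a cycle-neighbor by a valid contraction, which lowers $k$ by one while leaving $S$, as a set of hub-neighbors, intact. I then split into two cases. If $|S|\ge 4$, I absorb every cycle vertex outside $S$, arriving at the universal wheel on the $|S|\ge 4$ remaining cycle vertices, namely $W_{|S|+1}$ with $|S|+1\ge 5$. If $|S|=3$, I absorb all but one such vertex; since $k\ge 4$ there is at least one to retain, and the result is a $C_4$ whose hub meets exactly three of its four vertices, i.e.\ $\widehat W_4$. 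In either case the wheel equals or is built from a member of the allowed list.

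The main thing requiring care is this last case analysis. Collapsing a wheel with $|S|=3$ all the way would yield $K_4=W_4$, which is \emph{not} on the allowed list, so I must stop one contraction early at $\widehat W_4$; dually, I must never contract an edge between two hub-neighbors, lest I merge them and create a parallel edge to $h$. Keeping the bookkeeping straight---verifying at each step that the contracted edge's endpoints have no common neighbor, that the set of hub-neighbors is preserved, and that no cycle collapses below length four---is the only real obstacle, and it is routine once the contraction reformulation is in hand.
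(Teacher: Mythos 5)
Your proposal is correct and is essentially the paper's proof run in reverse: contracting the $K_{3,3}$-edge $a_3b_3$ and the prism rung $u_3v_3$ exactly inverts the paper's two hub partitions of $W_5$, and absorbing rim vertices inverts the paper's rim subdivisions, with the same case split (hub of degree $\ge 4$ versus degree $3$). One phrasing slip worth fixing: in the prism paragraph the condition justifying the contraction is that the interior path vertex and the neighbor absorbing it have no common neighbor (equivalently, that its two path-neighbors are non-adjacent), not that the two path-neighbors ``share no common neighbor''---the interior vertex itself is such a common neighbor.
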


\begin{proof}
    $K_{3,3}$ is built from $W_5$: partition the hub so that diagonally opposite edges lie on the same side of the partition. 
    Any prism is equal to or built from the six-vertex prism, which is built from $W_5$: partition the hub so that diagonally opposite edges lie on different sides of the partition.


\tikzset{
    pattern size/.store in=\mcSize, 
    pattern size = 5pt,
    pattern thickness/.store in=\mcThickness, 
    pattern thickness = 0.3pt,
    pattern radius/.store in=\mcRadius, 
    pattern radius = 1pt}
    \makeatletter
    \pgfutil@ifundefined{pgf@pattern@name@_4eakzwuf2}{
    \pgfdeclarepatternformonly[\mcThickness,\mcSize]{_4eakzwuf2}
    {\pgfqpoint{-\mcThickness}{-\mcThickness}}
    {\pgfpoint{\mcSize}{\mcSize}}
    {\pgfpoint{\mcSize}{\mcSize}}
    {
    \pgfsetcolor{\tikz@pattern@color}
    \pgfsetlinewidth{\mcThickness}
    \pgfpathmoveto{\pgfpointorigin}
    \pgfpathlineto{\pgfpoint{0}{\mcSize}}
    \pgfusepath{stroke}
    }}
    \makeatother
    
     
    \tikzset{
    pattern size/.store in=\mcSize, 
    pattern size = 5pt,
    pattern thickness/.store in=\mcThickness, 
    pattern thickness = 0.3pt,
    pattern radius/.store in=\mcRadius, 
    pattern radius = 1pt}
    \makeatletter
    \pgfutil@ifundefined{pgf@pattern@name@_ygqco2urn}{
    \pgfdeclarepatternformonly[\mcThickness,\mcSize]{_ygqco2urn}
    {\pgfqpoint{-\mcThickness}{-\mcThickness}}
    {\pgfpoint{\mcSize}{\mcSize}}
    {\pgfpoint{\mcSize}{\mcSize}}
    {
    \pgfsetcolor{\tikz@pattern@color}
    \pgfsetlinewidth{\mcThickness}
    \pgfpathmoveto{\pgfpointorigin}
    \pgfpathlineto{\pgfpoint{0}{\mcSize}}
    \pgfusepath{stroke}
    }}
    \makeatother
    
     
    \tikzset{
    pattern size/.store in=\mcSize, 
    pattern size = 5pt,
    pattern thickness/.store in=\mcThickness, 
    pattern thickness = 0.3pt,
    pattern radius/.store in=\mcRadius, 
    pattern radius = 1pt}
    \makeatletter
    \pgfutil@ifundefined{pgf@pattern@name@_rfyr3e2ks}{
    \pgfdeclarepatternformonly[\mcThickness,\mcSize]{_rfyr3e2ks}
    {\pgfqpoint{-\mcThickness}{-\mcThickness}}
    {\pgfpoint{\mcSize}{\mcSize}}
    {\pgfpoint{\mcSize}{\mcSize}}
    {
    \pgfsetcolor{\tikz@pattern@color}
    \pgfsetlinewidth{\mcThickness}
    \pgfpathmoveto{\pgfpointorigin}
    \pgfpathlineto{\pgfpoint{0}{\mcSize}}
    \pgfusepath{stroke}
    }}
    \makeatother
    \tikzset{every picture/.style={line width=0.5pt}} 
     
\begin{figure}[!htbp]
	\centering
    \begin{tikzpicture}[x=0.75pt,y=0.75pt,yscale=-1,xscale=1]
    
    \draw    (79.4,99.8) -- (79.4,140.2) ;
    \draw [shift={(79.4,140.2)}, rotate = 90] [color={rgb, 255:red, 0; green, 0; blue, 0 }  ][fill={rgb, 255:red, 0; green, 0; blue, 0 }  ][line width=0.75]      (0, 0) circle [x radius= 1.34, y radius= 1.34]   ;
    \draw [shift={(79.4,99.8)}, rotate = 90] [color={rgb, 255:red, 0; green, 0; blue, 0 }  ][fill={rgb, 255:red, 0; green, 0; blue, 0 }  ][line width=0.75]      (0, 0) circle [x radius= 1.34, y radius= 1.34]   ;
    \draw [pattern=_4eakzwuf2,pattern size=6pt,pattern thickness=0.75pt,pattern radius=0pt, pattern color={rgb, 255:red, 0; green, 0; blue, 0}] [dash pattern={on 4.5pt off 4.5pt}]  (79.4,99.8) -- (158.6,99.8) ;
    \draw    (158.6,99.8) -- (158.6,140.2) ;
    \draw [shift={(158.6,140.2)}, rotate = 90] [color={rgb, 255:red, 0; green, 0; blue, 0 }  ][fill={rgb, 255:red, 0; green, 0; blue, 0 }  ][line width=0.75]      (0, 0) circle [x radius= 1.34, y radius= 1.34]   ;
    \draw [shift={(158.6,99.8)}, rotate = 90] [color={rgb, 255:red, 0; green, 0; blue, 0 }  ][fill={rgb, 255:red, 0; green, 0; blue, 0 }  ][line width=0.75]      (0, 0) circle [x radius= 1.34, y radius= 1.34]   ;
    \draw    (240.4,100.2) -- (240.4,140.6) ;
    \draw [shift={(240.4,140.6)}, rotate = 90] [color={rgb, 255:red, 0; green, 0; blue, 0 }  ][fill={rgb, 255:red, 0; green, 0; blue, 0 }  ][line width=0.75]      (0, 0) circle [x radius= 1.34, y radius= 1.34]   ;
    \draw [shift={(240.4,100.2)}, rotate = 90] [color={rgb, 255:red, 0; green, 0; blue, 0 }  ][fill={rgb, 255:red, 0; green, 0; blue, 0 }  ][line width=0.75]      (0, 0) circle [x radius= 1.34, y radius= 1.34]   ;
    \draw    (200,100.2) -- (200,140.6) ;
    \draw [shift={(200,140.6)}, rotate = 90] [color={rgb, 255:red, 0; green, 0; blue, 0 }  ][fill={rgb, 255:red, 0; green, 0; blue, 0 }  ][line width=0.75]      (0, 0) circle [x radius= 1.34, y radius= 1.34]   ;
    \draw [shift={(200,100.2)}, rotate = 90] [color={rgb, 255:red, 0; green, 0; blue, 0 }  ][fill={rgb, 255:red, 0; green, 0; blue, 0 }  ][line width=0.75]      (0, 0) circle [x radius= 1.34, y radius= 1.34]   ;
    \draw [pattern=_ygqco2urn,pattern size=6pt,pattern thickness=0.75pt,pattern radius=0pt, pattern color={rgb, 255:red, 0; green, 0; blue, 0}] [dash pattern={on 4.5pt off 4.5pt}]  (79.4,140.2) -- (158.6,140.2) ;
    \draw    (79.4,99.8) -- (99.4,119.8) ;
    \draw [shift={(99.4,119.8)}, rotate = 45] [color={rgb, 255:red, 0; green, 0; blue, 0 }  ][fill={rgb, 255:red, 0; green, 0; blue, 0 }  ][line width=0.75]      (0, 0) circle [x radius= 1.34, y radius= 1.34]   ;
    \draw    (99.4,119.8) -- (79.4,140.2) ;
    \draw [shift={(79.4,140.2)}, rotate = 134.43] [color={rgb, 255:red, 0; green, 0; blue, 0 }  ][fill={rgb, 255:red, 0; green, 0; blue, 0 }  ][line width=0.75]      (0, 0) circle [x radius= 1.34, y radius= 1.34]   ;
    \draw    (158.6,99.8) -- (139,119.8) ;
    \draw [shift={(139,119.8)}, rotate = 134.42] [color={rgb, 255:red, 0; green, 0; blue, 0 }  ][fill={rgb, 255:red, 0; green, 0; blue, 0 }  ][line width=0.75]      (0, 0) circle [x radius= 1.34, y radius= 1.34]   ;
    \draw    (139,119.8) -- (158.6,140.2) ;
    \draw [shift={(158.6,140.2)}, rotate = 46.15] [color={rgb, 255:red, 0; green, 0; blue, 0 }  ][fill={rgb, 255:red, 0; green, 0; blue, 0 }  ][line width=0.75]      (0, 0) circle [x radius= 1.34, y radius= 1.34]   ;
    \draw [pattern=_rfyr3e2ks,pattern size=6pt,pattern thickness=0.75pt,pattern radius=0pt, pattern color={rgb, 255:red, 0; green, 0; blue, 0}] [dash pattern={on 4.5pt off 4.5pt}]  (99.4,119.8) -- (139,119.8) ;
    \draw    (240.4,140.6) -- (200,140.6) ;
    \draw [shift={(200,140.6)}, rotate = 180] [color={rgb, 255:red, 0; green, 0; blue, 0 }  ][fill={rgb, 255:red, 0; green, 0; blue, 0 }  ][line width=0.75]      (0, 0) circle [x radius= 1.34, y radius= 1.34]   ;
    \draw [shift={(240.4,140.6)}, rotate = 180] [color={rgb, 255:red, 0; green, 0; blue, 0 }  ][fill={rgb, 255:red, 0; green, 0; blue, 0 }  ][line width=0.75]      (0, 0) circle [x radius= 1.34, y radius= 1.34]   ;
    \draw    (240.4,100.2) -- (200,100.2) ;
    \draw [shift={(200,100.2)}, rotate = 180] [color={rgb, 255:red, 0; green, 0; blue, 0 }  ][fill={rgb, 255:red, 0; green, 0; blue, 0 }  ][line width=0.75]      (0, 0) circle [x radius= 1.34, y radius= 1.34]   ;
    \draw [shift={(240.4,100.2)}, rotate = 180] [color={rgb, 255:red, 0; green, 0; blue, 0 }  ][fill={rgb, 255:red, 0; green, 0; blue, 0 }  ][line width=0.75]      (0, 0) circle [x radius= 1.34, y radius= 1.34]   ;
    \draw    (319.4,100.6) -- (319.4,141) ;
    \draw [shift={(319.4,141)}, rotate = 90] [color={rgb, 255:red, 0; green, 0; blue, 0 }  ][fill={rgb, 255:red, 0; green, 0; blue, 0 }  ][line width=0.75]      (0, 0) circle [x radius= 1.34, y radius= 1.34]   ;
    \draw [shift={(319.4,100.6)}, rotate = 90] [color={rgb, 255:red, 0; green, 0; blue, 0 }  ][fill={rgb, 255:red, 0; green, 0; blue, 0 }  ][line width=0.75]      (0, 0) circle [x radius= 1.34, y radius= 1.34]   ;
    \draw    (279,100.6) -- (279,141) ;
    \draw [shift={(279,141)}, rotate = 90] [color={rgb, 255:red, 0; green, 0; blue, 0 }  ][fill={rgb, 255:red, 0; green, 0; blue, 0 }  ][line width=0.75]      (0, 0) circle [x radius= 1.34, y radius= 1.34]   ;
    \draw [shift={(279,100.6)}, rotate = 90] [color={rgb, 255:red, 0; green, 0; blue, 0 }  ][fill={rgb, 255:red, 0; green, 0; blue, 0 }  ][line width=0.75]      (0, 0) circle [x radius= 1.34, y radius= 1.34]   ;
    \draw    (319.4,141) -- (279,141) ;
    \draw [shift={(279,141)}, rotate = 180] [color={rgb, 255:red, 0; green, 0; blue, 0 }  ][fill={rgb, 255:red, 0; green, 0; blue, 0 }  ][line width=0.75]      (0, 0) circle [x radius= 1.34, y radius= 1.34]   ;
    \draw [shift={(319.4,141)}, rotate = 180] [color={rgb, 255:red, 0; green, 0; blue, 0 }  ][fill={rgb, 255:red, 0; green, 0; blue, 0 }  ][line width=0.75]      (0, 0) circle [x radius= 1.34, y radius= 1.34]   ;
    \draw    (319.4,100.6) -- (279,100.6) ;
    \draw [shift={(279,100.6)}, rotate = 180] [color={rgb, 255:red, 0; green, 0; blue, 0 }  ][fill={rgb, 255:red, 0; green, 0; blue, 0 }  ][line width=0.75]      (0, 0) circle [x radius= 1.34, y radius= 1.34]   ;
    \draw [shift={(319.4,100.6)}, rotate = 180] [color={rgb, 255:red, 0; green, 0; blue, 0 }  ][fill={rgb, 255:red, 0; green, 0; blue, 0 }  ][line width=0.75]      (0, 0) circle [x radius= 1.34, y radius= 1.34]   ;
    \draw    (240.4,100.2) -- (220,120.2) ;
    \draw [shift={(220,120.2)}, rotate = 135.57] [color={rgb, 255:red, 0; green, 0; blue, 0 }  ][fill={rgb, 255:red, 0; green, 0; blue, 0 }  ][line width=0.75]      (0, 0) circle [x radius= 1.34, y radius= 1.34]   ;
    \draw [shift={(240.4,100.2)}, rotate = 135.57] [color={rgb, 255:red, 0; green, 0; blue, 0 }  ][fill={rgb, 255:red, 0; green, 0; blue, 0 }  ][line width=0.75]      (0, 0) circle [x radius= 1.34, y radius= 1.34]   ;
    \draw    (240.4,140.6) -- (220,120.2) ;
    \draw [shift={(220,120.2)}, rotate = 225] [color={rgb, 255:red, 0; green, 0; blue, 0 }  ][fill={rgb, 255:red, 0; green, 0; blue, 0 }  ][line width=0.75]      (0, 0) circle [x radius= 1.34, y radius= 1.34]   ;
    \draw [shift={(240.4,140.6)}, rotate = 225] [color={rgb, 255:red, 0; green, 0; blue, 0 }  ][fill={rgb, 255:red, 0; green, 0; blue, 0 }  ][line width=0.75]      (0, 0) circle [x radius= 1.34, y radius= 1.34]   ;
    \draw    (200,140.6) -- (220,120.2) ;
    \draw [shift={(220,120.2)}, rotate = 314.43] [color={rgb, 255:red, 0; green, 0; blue, 0 }  ][fill={rgb, 255:red, 0; green, 0; blue, 0 }  ][line width=0.75]      (0, 0) circle [x radius= 1.34, y radius= 1.34]   ;
    \draw [shift={(200,140.6)}, rotate = 314.43] [color={rgb, 255:red, 0; green, 0; blue, 0 }  ][fill={rgb, 255:red, 0; green, 0; blue, 0 }  ][line width=0.75]      (0, 0) circle [x radius= 1.34, y radius= 1.34]   ;
    \draw    (200,100.2) -- (220,120.2) ;
    \draw [shift={(220,120.2)}, rotate = 45] [color={rgb, 255:red, 0; green, 0; blue, 0 }  ][fill={rgb, 255:red, 0; green, 0; blue, 0 }  ][line width=0.75]      (0, 0) circle [x radius= 1.34, y radius= 1.34]   ;
    \draw [shift={(200,100.2)}, rotate = 45] [color={rgb, 255:red, 0; green, 0; blue, 0 }  ][fill={rgb, 255:red, 0; green, 0; blue, 0 }  ][line width=0.75]      (0, 0) circle [x radius= 1.34, y radius= 1.34]   ;
    \draw    (319.4,100.6) -- (309,120.6) ;
    \draw [shift={(309,120.6)}, rotate = 117.47] [color={rgb, 255:red, 0; green, 0; blue, 0 }  ][fill={rgb, 255:red, 0; green, 0; blue, 0 }  ][line width=0.75]      (0, 0) circle [x radius= 1.34, y radius= 1.34]   ;
    \draw [shift={(319.4,100.6)}, rotate = 117.47] [color={rgb, 255:red, 0; green, 0; blue, 0 }  ][fill={rgb, 255:red, 0; green, 0; blue, 0 }  ][line width=0.75]      (0, 0) circle [x radius= 1.34, y radius= 1.34]   ;
    \draw    (279,100.6) -- (288.8,120.6) ;
    \draw [shift={(288.8,120.6)}, rotate = 63.9] [color={rgb, 255:red, 0; green, 0; blue, 0 }  ][fill={rgb, 255:red, 0; green, 0; blue, 0 }  ][line width=0.75]      (0, 0) circle [x radius= 1.34, y radius= 1.34]   ;
    \draw [shift={(279,100.6)}, rotate = 63.9] [color={rgb, 255:red, 0; green, 0; blue, 0 }  ][fill={rgb, 255:red, 0; green, 0; blue, 0 }  ][line width=0.75]      (0, 0) circle [x radius= 1.34, y radius= 1.34]   ;
    \draw    (309,120.6) -- (288.8,120.6) ;
    \draw [shift={(288.8,120.6)}, rotate = 180] [color={rgb, 255:red, 0; green, 0; blue, 0 }  ][fill={rgb, 255:red, 0; green, 0; blue, 0 }  ][line width=0.75]      (0, 0) circle [x radius= 1.34, y radius= 1.34]   ;
    \draw [shift={(309,120.6)}, rotate = 180] [color={rgb, 255:red, 0; green, 0; blue, 0 }  ][fill={rgb, 255:red, 0; green, 0; blue, 0 }  ][line width=0.75]      (0, 0) circle [x radius= 1.34, y radius= 1.34]   ;
    \draw    (319.4,141) -- (288.8,120.6) ;
    \draw [shift={(288.8,120.6)}, rotate = 213.69] [color={rgb, 255:red, 0; green, 0; blue, 0 }  ][fill={rgb, 255:red, 0; green, 0; blue, 0 }  ][line width=0.75]      (0, 0) circle [x radius= 1.34, y radius= 1.34]   ;
    \draw [shift={(319.4,141)}, rotate = 213.69] [color={rgb, 255:red, 0; green, 0; blue, 0 }  ][fill={rgb, 255:red, 0; green, 0; blue, 0 }  ][line width=0.75]      (0, 0) circle [x radius= 1.34, y radius= 1.34]   ;
    \draw    (309,120.6) -- (279,141) ;
    \draw [shift={(279,141)}, rotate = 145.78] [color={rgb, 255:red, 0; green, 0; blue, 0 }  ][fill={rgb, 255:red, 0; green, 0; blue, 0 }  ][line width=0.75]      (0, 0) circle [x radius= 1.34, y radius= 1.34]   ;
    \draw [shift={(309,120.6)}, rotate = 145.78] [color={rgb, 255:red, 0; green, 0; blue, 0 }  ][fill={rgb, 255:red, 0; green, 0; blue, 0 }  ][line width=0.75]      (0, 0) circle [x radius= 1.34, y radius= 1.34]   ;
    \draw   (247.25,116.71) -- (260.57,116.71) -- (260.57,113.87) -- (269.45,119.54) -- (260.57,125.2) -- (260.57,122.37) -- (247.25,122.37) -- cycle ;
    \draw   (189.91,122.49) -- (176.6,122.82) -- (176.67,125.65) -- (167.65,120.21) -- (176.39,114.33) -- (176.46,117.16) -- (189.77,116.83) -- cycle ;
    \end{tikzpicture}
\end{figure}

    A wheel whose hub has degree $k\geq 4$ is built from $W_k$ by subdividing edges along the rim. A wheel whose hub has degree $3$ has a rim of at least four vertices (by definition) and so can be built from $\widehat{W}_{4}$ similarly. 
\end{proof}

\begin{lmm}\label{Onlyhardlemma}
    For a graph $G$, at least one of the following holds: 
    \begin{itemize}
        \item $G$ is series-parallel.
        \item $G$ is a clique.
        \item $G$ has a cut-clique.
        \item $G$ contains an induced subgraph $H$ that is not series-parallel nor contains a $K_4$ subgraph. 
    \end{itemize}
    \end{lmm}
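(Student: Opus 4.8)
The plan is to prove the contrapositive: assuming $G$ is neither series-parallel nor a clique and has no cut-clique, I will exhibit an induced subgraph $H$ that has a $K_4$ minor but no $K_4$ subgraph (call such an $H$ a \emph{witness}). First some reductions. If $G$ contains no $K_4$ subgraph then, being non-series-parallel, $G$ itself is a witness; so I may assume $G$ has a $K_4$ subgraph and fix a maximal clique $M$ containing it, whence $|M|\ge 4$. A cut-vertex of a connected graph is a $K_1$ cut-clique, so $G$ has none; likewise the disconnected case reduces to components (a disjoint union is a clique-sum over the empty clique), so I treat $G$ as $2$-connected. Since $G$ is not a clique, $M\neq V(G)$, and since $M$ is a clique that is not a cut-clique, $G\setminus M$ is nonempty and connected.

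I then split on whether $G$ has an induced \emph{diamond}, i.e.\ an edge $cd$ two of whose common neighbors $a,b$ are non-adjacent. \textbf{Case A (a diamond exists).} The pair $\{c,d\}$ is a clique, hence not a cut-clique, so $a$ and $b$ lie in one component of $G\setminus\{c,d\}$; let $P$ be a shortest (thus induced) path from $a$ to $b$ there and set $H=G[\{c,d\}\cup V(P)]$. Cutting $P$ into two arcs $X_1\ni a$ and $X_2\ni b$ yields four pairwise-adjacent connected sets $\{c\},\{d\},X_1,X_2$, so $H$ has a $K_4$ minor. It then remains to choose $a,b,P$ so that $H$ has no $K_4$ subgraph; because $P$ is induced, the only threat is a pair of consecutive path-vertices both adjacent to $c$ and to $d$, which I would rule out by taking $a,b$ at minimum distance among the non-adjacent common-neighbor pairs of $cd$.

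\textbf{Case B ($G$ is diamond-free).} Then no vertex outside $M$ can be adjacent to two vertices of $M$ without completing a diamond with a third (maximality of $M$ forces a non-neighbor in $M$), so each outside vertex meets $M$ in at most one vertex. If some $w\in M$ had no neighbor outside $M$, then $M\setminus\{w\}$ would be a clique separating $w$ from $G\setminus M$, a cut-clique; hence every vertex of $M$ has a private outside neighbor. Choosing $a,b,c\in M$ with outside neighbors $y_a,y_b,y_c$ and joining $y_a,y_b,y_c$ by a minimal connected subgraph $X\subseteq V(G)\setminus M$, the sets $\{a\},\{b\},\{c\},X$ give a $K_4$ minor; since no vertex of $X$ is adjacent to two of $a,b,c$, the only way $H=G[\{a,b,c\}\cup V(X)]$ could contain a $K_4$ is for $X$ itself to contain one, which I would preclude by taking $X$ induced-tree-like.

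The hard part, in both cases, is exactly the same: guaranteeing that the induced subgraph built around a $K_4$ minor carries no $K_4$ subgraph, despite possible chords on the connecting path, common neighbors landing on it, or a hidden $K_4$ inside $G\setminus M$. My plan is to control these with extremal choices---shortest or induced paths, distance-minimal apex pairs, minimal connectors---and to show that whenever such a stray adjacency survives, it either yields a strictly smaller witness or exposes a clique cutset, contradicting the hypotheses. I expect this bookkeeping, rather than any single construction, to be the crux, and I would organize it as an induction on $|V(G)|$ (a minimal counterexample), so that any $K_4$ appearing inside $G\setminus M$ is itself already handled by the inductive hypothesis.
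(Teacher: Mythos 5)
Your plan is structurally close to the paper's proof (extremal paths, a case split governed by common neighbours of a clique), but it is a plan rather than a proof, and the two places where you defer the work are exactly where the argument can fail. In Case A, minimizing the distance between the non-adjacent common neighbours $a,b$ of the edge $cd$ does not rule out a $K_4$: if the shortest $a$--$b$ path in $G\setminus\{c,d\}$ has length $2$ and its middle vertex $p$ is itself adjacent to both $c$ and $d$, then $\{a,c,d,p\}$ induces a $K_4$ inside your $H$ (indeed $H$ is $K_5$ minus the edge $ab$), and no shorter non-adjacent common-neighbour pair exists to contradict your extremal choice. This ``nested common neighbour'' configuration is precisely what the paper's Case 3 is built to defuse, via the device of a minimal set $I$ of clique vertices having no common outside neighbour while every proper subset has one; your sketch has no counterpart to it, because your diamond $\{a,b,c,d\}$ is chosen arbitrarily rather than anchored to a maximum clique.

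Case B has a parallel gap: the claim that ``the only way $H=G[\{a,b,c\}\cup V(X)]$ could contain a $K_4$ is for $X$ itself to contain one'' is false, since $a$ together with a triangle among its neighbours in $X$ is also a $K_4$, and a vertex-minimal connector of three terminals need not be induced-tree-like --- it can carry chords and triangles, and ruling out a $K_4$ there requires an argument you have not given. You candidly flag that the bookkeeping is the crux and propose to handle surviving adjacencies by ``a strictly smaller witness or a clique cutset'' inside an induction on $|V(G)|$; that is plausible as a strategy, but until the $m=2$ subcase of Case A and the triangle-on-the-connector issue of Case B are actually resolved, the proof is not complete. The paper closes both holes with a single maximum-clique framework: Case 2 (some pair in the maximum clique has no common outside neighbour) makes your Case-A worry vanish by hypothesis, and Case 3's minimal non-dominated set $I$, together with choosing the common neighbour $d$ to minimize the path back to $b$, is the missing idea you would need to import.
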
 

    \begin{proof}
        Let $G$ be a graph. Assume that $G$ is not a clique, is not series-parallel, and has no cut-clique. We seek to show that $G$ contains an induced subgraph $H$ where $H$ is not series-parallel but does not have a $K_4$ subgraph.
        
        Let $K$ be a maximum clique in $G$. Every vertex of $K$ must have a neighbor in $G\backslash K$; if some $v\in V(K)$ did not, then $V(K)\backslash v$ would be a clique cutset for $G$. 
    
        \emph{Case 1.} Assume $|V(K)|\leq 2$. Then $G$ contains no $K_4$ subgraph, but since it is not series-parallel, it has a $K_4$ minor, so we can take $H=G$. 
    
        \emph{Case 2.} Assume $|V(K)|\geq 3$ and the existence of $v_1,v_2\in V(K)$ such that $N(v_1)\cap N(v_2)\cap V(G\backslash K)=\emptyset$. Let $v_3$ be an arbitrary third vertex in $K$, and let $u_1,u_2,u_3\in V(G\backslash K)$ be neighbors of $v_1,v_2,v_3$ respectively, chosen such that the path distance from $u_1$ to $u_2$ is minimized.

\begin{figure}[!htbp]
	\centering
    \begin{tikzpicture}[x=0.75pt,y=0.75pt,yscale=-1,xscale=1]

        
        \draw    (100.25,179.87) -- (99.85,230.27) ;
        \draw [shift={(99.85,230.27)}, rotate = 90.45] [color={rgb, 255:red, 0; green, 0; blue, 0 }  ][fill={rgb, 255:red, 0; green, 0; blue, 0 }  ][line width=0.75]      (0, 0) circle [x radius= 1.34, y radius= 1.34]   ;
        \draw [shift={(100.25,179.87)}, rotate = 90.45] [color={rgb, 255:red, 0; green, 0; blue, 0 }  ][fill={rgb, 255:red, 0; green, 0; blue, 0 }  ][line width=0.75]      (0, 0) circle [x radius= 1.34, y radius= 1.34]   ;
        \draw    (140.25,205.87) -- (99.85,230.27) ;
        \draw [shift={(99.85,230.27)}, rotate = 148.87] [color={rgb, 255:red, 0; green, 0; blue, 0 }  ][fill={rgb, 255:red, 0; green, 0; blue, 0 }  ][line width=0.75]      (0, 0) circle [x radius= 1.34, y radius= 1.34]   ;
        \draw [shift={(140.25,205.87)}, rotate = 148.87] [color={rgb, 255:red, 0; green, 0; blue, 0 }  ][fill={rgb, 255:red, 0; green, 0; blue, 0 }  ][line width=0.75]      (0, 0) circle [x radius= 1.34, y radius= 1.34]   ;
        \draw    (140.25,205.87) -- (100.25,179.87) ;
        \draw [shift={(100.25,179.87)}, rotate = 213.02] [color={rgb, 255:red, 0; green, 0; blue, 0 }  ][fill={rgb, 255:red, 0; green, 0; blue, 0 }  ][line width=0.75]      (0, 0) circle [x radius= 1.34, y radius= 1.34]   ;
        \draw [shift={(140.25,205.87)}, rotate = 213.02] [color={rgb, 255:red, 0; green, 0; blue, 0 }  ][fill={rgb, 255:red, 0; green, 0; blue, 0 }  ][line width=0.75]      (0, 0) circle [x radius= 1.34, y radius= 1.34]   ;
        \draw  [dash pattern={on 4.5pt off 4.5pt}]  (100.25,179.87) .. controls (147.85,173.01) and (242.25,182.21) .. (220.65,223.41) ;
        \draw [shift={(220.65,223.41)}, rotate = 117.67] [color={rgb, 255:red, 0; green, 0; blue, 0 }  ][fill={rgb, 255:red, 0; green, 0; blue, 0 }  ][line width=0.75]      (0, 0) circle [x radius= 1.34, y radius= 1.34]   ;
        \draw [shift={(100.25,179.87)}, rotate = 351.79] [color={rgb, 255:red, 0; green, 0; blue, 0 }  ][fill={rgb, 255:red, 0; green, 0; blue, 0 }  ][line width=0.75]      (0, 0) circle [x radius= 1.34, y radius= 1.34]   ;
        \draw  [dash pattern={on 4.5pt off 4.5pt}]  (99.85,230.27) .. controls (147.45,223.41) and (197.45,257.41) .. (220.65,223.41) ;
        \draw [shift={(220.65,223.41)}, rotate = 304.31] [color={rgb, 255:red, 0; green, 0; blue, 0 }  ][fill={rgb, 255:red, 0; green, 0; blue, 0 }  ][line width=0.75]      (0, 0) circle [x radius= 1.34, y radius= 1.34]   ;
        \draw [shift={(99.85,230.27)}, rotate = 351.79] [color={rgb, 255:red, 0; green, 0; blue, 0 }  ][fill={rgb, 255:red, 0; green, 0; blue, 0 }  ][line width=0.75]      (0, 0) circle [x radius= 1.34, y radius= 1.34]   ;
        \draw  [dash pattern={on 4.5pt off 4.5pt}]  (140.25,205.87) .. controls (187.85,199.01) and (195.85,208.21) .. (220.65,223.41) ;
        \draw [shift={(220.65,223.41)}, rotate = 31.5] [color={rgb, 255:red, 0; green, 0; blue, 0 }  ][fill={rgb, 255:red, 0; green, 0; blue, 0 }  ][line width=0.75]      (0, 0) circle [x radius= 1.34, y radius= 1.34]   ;
        \draw [shift={(140.25,205.87)}, rotate = 351.79] [color={rgb, 255:red, 0; green, 0; blue, 0 }  ][fill={rgb, 255:red, 0; green, 0; blue, 0 }  ][line width=0.75]      (0, 0) circle [x radius= 1.34, y radius= 1.34]   ;
        
        \draw (87.2,175.81) node [anchor=north west][inner sep=0.75pt]  [font=\footnotesize]  {$v_{1}$};
        \draw (85.6,215.81) node [anchor=north west][inner sep=0.75pt]  [font=\footnotesize]  {$v_{2}$};
        \draw (134.8,189.41) node [anchor=north west][inner sep=0.75pt]  [font=\footnotesize]  {$v_{3}$};
        \draw (176.4,207.81) node [anchor=north west][inner sep=0.75pt]  [font=\footnotesize]  {$P'$};
        \draw (196.4,170.61) node [anchor=north west][inner sep=0.75pt]  [font=\footnotesize]  {$P$};
        \draw (37.6,193.41) node [anchor=north west][inner sep=0.75pt]    {$H=$};
        \end{tikzpicture}
    \end{figure}
        Since $G$ has no clique cutset, $G\backslash K$ is connected, so let $P$ be a shortest path in $G\backslash K$ from $u_1$ to $u_2$. Let $P'$ be a shortest path among all those in $G\backslash K$ whose endpoints are $u_3$ and an $x\in P$. Now let $H$ be the subgraph induced on $\{v_1,v_2,v_3\} \cup V(P) \cup V(P')$; we'll show it has the desired properties. We know that $H\backslash \{v_1,v_2,v_3\}$ is connected, so we can contract all of $H\backslash \{v_1,v_2,v_3\}$. This yields $K_4$, so $H$ has a $K_4$ minor. 
    
        By $N(v_1)\cap N(v_2)\cap V(G\backslash K)=\emptyset$ and the definition of $P$, no $K_4\subseteq H$ can contain more than two elements of $\{v_1,v_2\}\cup V(P)$, and they can't be $v_1,v_2$. Thus, any $K_4$ must include some vertex of $V(P)$ and at least two vertices of $V(P')\backslash V(P)$. However, by the definition of $P'$, at most one vertex of $V(P')\backslash V(P)$ can have neighbors in $V(P)$. Thus, $H$ contains no $K_4$ subgraph. 
        
        \emph{Case 3.} Assume that $|V(K)|\geq 3$ but every pair of vertices in $K$ has some common neighbor in $G\backslash K$. Of course, not all of $K$ can have a common neighbor in $G\backslash K$; otherwise it wouldn't be a maximum clique. Thus, there must exist some $I\subseteq V(K)$ with $|I|\geq 3$ having the property that $I$ has no common neighbor in $G\backslash K$ but every proper subset of $I$ does. 
        
        Let $A,B\subset I$ be proper subsets of some such $I$, and pick any $a\in A\backslash B$, $b\in B\backslash A$, $c\in A\cap B$. Since $G$ is has no cut-clique,$G\backslash K$ is connected, so there there is a minimal path $P_{b,v}$ in $(G\backslash K) \cup \{b\}$ from $b$ to any given vertex $v$ outside $K$. Choose $d\in N(a)\cap N(c)\cap V(G\backslash K)$ to minimize $|V(P_{b,d})|$. Now consider the graph induced on $\{a,c\}\cup V(P_{b,d})$; that will be our $H$.

\includegraphics{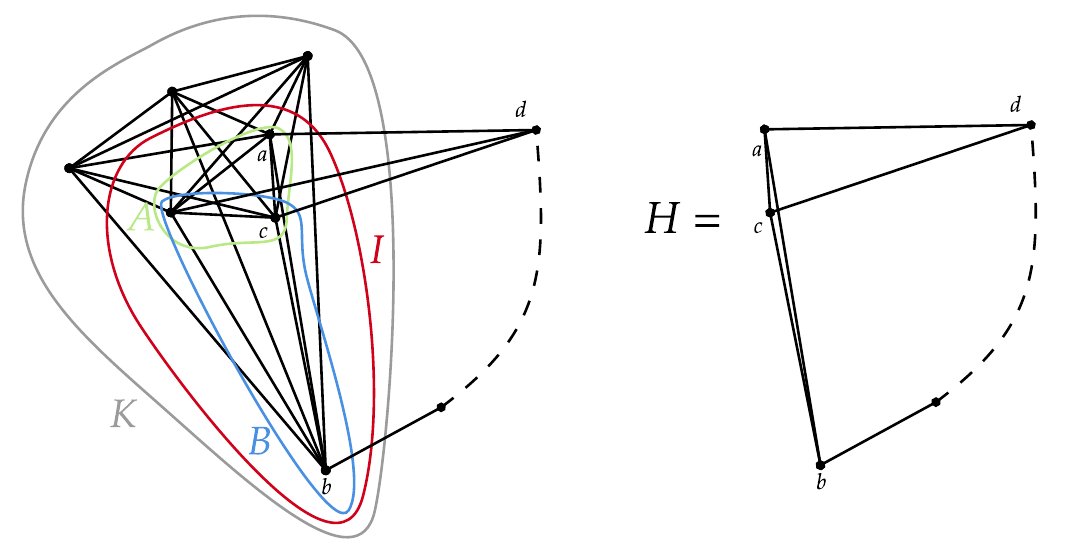}

        Contracting $P_{b,d}$ to a single edge gives us $K_4$, so $H$ has a $K_4$ minor.
        No vertex of $P_{b,d}\backslash \{b,d\}$ can neighbor both $a$ and $c$, since we chose $d$ to minimize $|P_{b,d}|$, and every $v_i\in V(P_{b,d})\backslash \{b,d\}$ has exactly two neighbors $v_{i-1},v_{i+1}$ in $V(P_{b,d})$ by minimality of $P_{b,d}$. By minimality of $P_{b,d}$, $v_{i-1}\not \sim v_{i+1}$. Thus, no $v_i\in V(P_{b,d})\backslash \{b,d\}$ can be part of a $K_4$ in $H$. There are only four other vertices ($a,b,c,d$)  of $H$, but $b\not\sim d$. Thus, $H$ contains no $K_4$ subgraph.
        
        \emph{To conclude.} In each of these cases, there's an induced $H\subseteq G$ that has a $K_4$ minor but contains no $K_4$ subgraph. 
    \end{proof}

\begin{lmm}\label{WheelsareF}
    If $H$ has a $G$ minor, so does everything built from $H$. If every edge of $G$ is in a triangle and $H$ contains no $G$ subgraph, nothing built from $H$ contains a $G$ subgraph. 
\end{lmm}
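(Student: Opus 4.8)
The plan is to prove both assertions by induction on the length of the sequence of vertex partitions, so that in each case it suffices to treat a single partition step $H\rightsquigarrow H'$ and then iterate. Fix such a step: $H'$ is obtained from $H$ by replacing a vertex $v$ with two adjacent vertices $x,y$ whose neighbourhoods (apart from each other) partition $N_H(v)$. Two features of this step drive everything. First, contracting the edge $xy$ in $H'$ merges $x,y$ back into a single vertex whose neighbourhood is $N(x)\cup N(y)=N_H(v)$; hence $H=H'/xy$, so $H$ is a minor of $H'$. Second, because $(N(x),N(y))$ is a \emph{partition} of $N_H(v)$, the vertices $x$ and $y$ have no common neighbour in $H'$, so the new edge $xy$ lies in no triangle of $H'$.

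Given the first observation the minor statement is immediate: if $G$ is a minor of $H$ and $H$ is a minor of $H'$, then $G$ is a minor of $H'$ by transitivity of the minor relation, and inducting on the number of partitions shows that everything built from $H$ has a $G$ minor. I would record this as the easy half.

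For the subgraph statement I would argue one step at a time in contrapositive form: assuming $H'$ contains a subgraph copy $\phi$ of $G$, produce a copy of $G$ in $H$. The natural move is to push the copy down into $H$ by sending any image vertex equal to $x$ or $y$ to $v$. This works cleanly whenever the copy uses at most one of $x,y$: if, say, $y$ is unused, then every $\phi$-neighbour of the vertex mapped to $x$ lies in $N(x)\subseteq N_H(v)$, so replacing $x$ by $v$ preserves all required edges and preserves injectivity, giving $G\subseteq H$. The whole difficulty is therefore concentrated in the case where the copy uses \emph{both} $x$ and $y$, say $\phi(a)=x$ and $\phi(b)=y$ with $a\neq b$. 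This is exactly where the hypothesis that every edge of $G$ lies in a triangle is meant to bite: if $a\sim_G b$, then some $c$ completes a triangle on the edge $ab$, and $\phi(c)$ would be a common neighbour of $x$ and $y$ in $H'$, contradicting the second observation above.

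I expect this last configuration to be the main obstacle: controlling a copy that occupies both endpoints of the triangle-free edge $xy$ when there is no adjacency $a\sim_G b$ with which to invoke the triangle hypothesis directly. The idea I would pursue is to exploit the no-common-neighbour property of $\{x,y\}$ together with whatever adjacency or common-neighbour structure $G$ supplies around $a$ and $b$, so that any realization using both $x$ and $y$ forces a common neighbour of $x,y$ in $H'$ and hence a contradiction; every copy is then reduced to one avoiding at least one of $x,y$, the merging argument finishes the single step, and induction on the number of partition steps completes the proof.
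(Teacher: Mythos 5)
Your handling of the minor statement and of the cases where the copy of $G$ meets at most one of $x,y$ matches the paper's proof. The genuine gap is exactly the case you flag and leave open: a copy of $G$ in $H'$ occupying both $x$ and $y$ at non-adjacent vertices $a\not\sim_G b$. Under your reading of ``$G$ subgraph'' as a not-necessarily-induced subgraph, this case cannot be closed, because the statement is then false: take $G=K_3\cup K_3$ (every edge lies in a triangle) and let $H$ be the bowtie, two triangles sharing a vertex $v$. Then $H$ has only five vertices, hence no $K_3\cup K_3$ subgraph; but splitting $v$ into $x$ with $N(x)=\{p,q\}$ and $y$ with $N(y)=\{r,s\}$ yields two vertex-disjoint triangles $xpq$ and $yrs$ in $H'$. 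The copy uses both $x$ and $y$ at vertices in different components of $G$, so no edge $ab$ exists and no common neighbour of $x,y$ is forced.

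The paper avoids this case entirely by reading ``$G$ subgraph'' as an \emph{induced} subgraph (consistent with its convention that a $p$-clique is an induced $K_p$, and with the only application, $G=K_4$, where the two notions coincide): an induced copy containing both $x$ and $y$ necessarily contains the edge $xy$ of $H'$, so $a\sim_G b$ automatically, and your own triangle argument supplies the contradiction. In other words, the configuration you identify as the main obstacle is vacuous under the intended reading; no further idea is needed. One caveat if you do adopt the induced reading: in your ``uses exactly one of $x,y$'' case, replacing $x$ by $v$ can create new edges to vertices of the copy lying in $N(y)$, so the pushed-down copy need not be induced for general $G$; this is harmless when $G$ is complete (the only case used), since a complete copy cannot acquire extra edges. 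So the fix is to state which notion of subgraph you are using, note that both $x,y$ appearing in an induced copy forces $a\sim_G b$, and then your argument closes as written.
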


\begin{proof}
    Let $H'$ be obtained from $H$ by a single vertex partition, splitting vertex $v$ into vertices $x$ and $y$. 
    By contracting $xy$, we see that $H$ is a minor of $H'$, so all minors of $H$ are minors of $H'$. This proves the lemma's first sentence. 
    
    Consider an induced subgraph $G$ of $H'$. If it includes neither of $x,y$, it is an induced subgraph in  $H$. If it includes exactly one of $x,y$, then $(G\backslash \{x, y\}) \cup \{v\}$ is an induced $G\subseteq H$ of the same size. It cannot include both of $x,y$ because $xy$ is part of no triangle by the definition of vertex partition, but $G$ has no such edge. This contradiction proves the second sentence. 
\end{proof}

\begin{lmm}\label{Mostboringlemma}
    If $G$ is in $\mathcal{F}$, then $G$ has treewidth $3$ and no cut-clique. 
\end{lmm}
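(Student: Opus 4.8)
The plan is to reduce every member of $\mathcal{F}$ to a wheel and then verify the three assertions (treewidth $\geq 3$, treewidth $\leq 3$, and no cut-clique) separately. By Lemma~\ref{Fiswheelfounded} every $G\in\mathcal{F}$ is equal to or built from a wheel: a general wheel from $\widehat{W}_4$ or some $W_k$, and $K_{3,3}$ and the prisms from $W_5$ (the family ``equal to or built from $\widehat{W}_4$'' is already of this form). So I may assume $G$ is equal to or built from a wheel $W$ and use the structural picture that results: $G$ has a \emph{rim} (a cycle obtained by subdividing the rim of $W$), a \emph{hub} (a tree, since a vertex partition of a tree is a tree), and a fixed number $s\geq 3$ of \emph{spokes} (rim--hub edges), with each rim vertex incident to at most one spoke, as each split sends $W$'s hub edge to exactly one side.

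For \emph{treewidth $\geq 3$} and \emph{no cut-clique} I would argue uniformly. Each of $\widehat{W}_4,W_k,W_5$ has a $K_4$ minor (contract one rim arc), so by the first sentence of Lemma~\ref{WheelsareF} so does everything built from it; as treewidth is minor-monotone and $\mathrm{tw}(K_4)=3$, we get $\mathrm{tw}(G)\geq 3$. For cut-cliques, a wheel has none: it is $2$-connected, and its only cliques are vertices, edges, and triangles $\{h,c_i,c_{i+1}\}$ through the hub $h$, so deleting one removes at most $h$ and two consecutive rim vertices, leaving the surviving rim in a single path still attached (if $h$ survives) through one of the $\geq 3$ spokes. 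Moreover a vertex partition preserves ``no cut-clique'': if $H'$ splits $v$ into adjacent $x,y$, then contracting $xy$ recovers $H$ and preserves component counts, and $xy$ lies in no triangle. Given a putative cut-clique $K'$ of $H'$, if $K'$ avoids $\{x,y\}$ it is a clique of $H$ cutting $H$; if $K'\supseteq\{x,y\}$ then $K'=\{x,y\}$ and deleting it equals deleting $v$; and if $K'$ meets $\{x,y\}$ in one vertex, replacing that vertex by $v$ yields a clique $K''$ of $H$ with $H'\setminus K'$ obtained from $H\setminus K''$ by re-adding a single vertex, so $\#\mathrm{comp}(H\setminus K'')\geq\#\mathrm{comp}(H'\setminus K')$. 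In each case $K'$ cutting $H'$ forces a cut-clique of $H$, a contradiction.

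The main work is \emph{treewidth $\leq 3$}, which I would establish by exhibiting one vertex ordering $\pi$ with every $|N^+_\pi(v_i)|\leq 3$; then $G_\pi$ is a chordal supergraph with $\omega(G_\pi)\leq 4$, so $\mathrm{tw}(G)\leq 3$ since treewidth is $\min\omega(G')-1$ over chordal supergraphs $G'$. First eliminate the internal degree-$2$ rim vertices arc by arc: each sees only its two current rim neighbors, so its higher-neighborhood has size $2$, and the effect is to ``zip'' each arc shut, adding the edge between its endpoints. This collapses the rim onto the $s$ spoke-endpoints, which now form a cycle $C_s$, leaving a \emph{core}: $C_s$ plus the hub tree plus $s$ spokes. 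When the hub is a single vertex (a wheel) the core is exactly $W_{s+1}$, a cycle with one apex, of treewidth $3$. For $K_{3,3}$ and the prisms the hub is a single edge and $s=4$, so the core is $K_{3,3}$ or the triangular prism, each of treewidth $3$. For graphs built from $\widehat{W}_4$ we have $s=3$, so the core is a triangle $r_1r_2r_3$ with a hub tree $U$ and spokes $r_i\sim t_i\in U$; here I root $U$ at the \emph{median} $m$ of $t_1,t_2,t_3$ and eliminate from the leaves inward. When a vertex $v\neq m$ is removed, its surviving neighbors are its parent together with those $r_i$ whose $t_i$ lies in $v$'s subtree; since $m$ is the median, a proper subtree omits some $t_i$, so at most two such $r_i$ occur and the back-degree is $\leq 3$. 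Finally $m$ sees only its $\leq 3$ spoke-endpoints, and the triangle collapses with back-degree $\leq 2$.

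The hard part is exactly this last case. Vertex-partitioning a hub does \emph{not} preserve $\mathrm{tw}\leq 3$ in general: splitting the hub of $W_7$ into its odd- and even-indexed spokes produces a graph with a $K_5$ minor (branch sets $\{u_1\},\{u_2\},\{c_1,c_2\},\{c_3,c_4\},\{c_5,c_6\}$), hence treewidth $4$, which is why such a graph is excluded from $\mathcal{F}$. So one cannot argue the treewidth bound by a blanket ``built from'' preservation the way one can for the cut-clique property; the proof must exploit that the only hub-partitioned members of $\mathcal{F}$ have a \emph{bounded} number of spokes ($3$ or $4$). The median-rooted elimination of the hub tree is the device that converts ``$3$ spokes'' into a width-$3$ certificate, and I expect it to be the one genuinely non-mechanical step.
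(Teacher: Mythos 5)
Your proposal is correct in substance, but it is doing something very different from the paper, which disposes of this lemma with the single word ``Inspection.'' You have supplied an actual argument: reduce every member of $\mathcal{F}$ to ``equal to or built from a wheel'' via Lemma~\ref{Fiswheelfounded}, get treewidth $\geq 3$ from minor-monotonicity and the first half of Lemma~\ref{WheelsareF}, show that vertex partition preserves the absence of cut-cliques, and certify treewidth $\leq 3$ by an explicit elimination ordering (zip the degree-$2$ arc vertices, then eliminate the hub tree from the leaves toward the median of the three spoke-attachments). This buys something real: the paper's ``inspection'' is not honest for the infinite subfamily built from $\widehat{W}_4$, where the hub can be an arbitrary tree, and your observation that a blanket ``built from'' argument \emph{cannot} work for the upper bound (the $W_7$ hub-split has a $K_5$ minor) correctly identifies why the bounded number of spokes must be used. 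The median-rooted elimination is exactly the right device, and your verification that each non-median hub vertex has back-degree at most $1+2$ is sound.

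Two small points need patching. First, your case analysis asserts that for prisms ``the hub is a single edge,'' but a general prism is built from the six-vertex prism by subdividing all three connecting paths, one of which is the hub edge; so the hub can be a longer path. The fix is already in your toolkit: the internal vertices of that path have degree $2$ and can be zipped in the same pass as the rim arcs, after which the core is the triangular prism. Second, in the cut-clique preservation step, the inequality $\#\mathrm{comp}(H\setminus K'')\geq\#\mathrm{comp}(H'\setminus K')$ does not follow merely from ``re-adding a single vertex,'' since an added vertex that is isolated would increase the component count. You need the extra line: if $y$ had all its neighbors in $K'$, then (assuming both parts of the partition are nonempty) $y$ would have a neighbor $w\neq x$ with $w\in K'$, and $w\sim x$ inside the clique $K'$ would put $xy$ in a triangle, contradicting the definition of vertex partition; hence $y$ retains a neighbor outside $K'$ and the inequality holds. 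Neither issue threatens the argument.
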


\begin{proof}
    Inspection. 
\end{proof}

\begin{lmm}\label{3kimpliesAk}
    If $G$ has a $(k+1)$-clique chordal supergraph, then it is a clique-sum of cliques and graphs of treewith $\leq k$.
\end{lmm}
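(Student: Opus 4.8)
The plan is to run the standard clique-sum decomposition of the chordal supergraph $G'$ through its clique tree, and then repair it so that it becomes a clique-sum decomposition of $G$ itself. Let $G'$ be the given $(k+1)$-clique chordal supergraph, and recall the classical fact that a chordal graph is the clique-sum of its maximal cliques along a \emph{clique tree} $T$: a tree whose nodes are the maximal cliques of $G'$, with the property that for every vertex the nodes whose cliques contain it induce a subtree of $T$. (If $G'$ is disconnected one uses a clique forest, or equivalently allows clique-sums along the empty clique, i.e.\ disjoint unions.)

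First I would isolate the two facts that make the argument work. Call a maximal clique of $G'$ \emph{big} if it has more than $k$ vertices and \emph{small} otherwise. (1) Every big maximal clique $C$ is already a clique of $G$: for any $u,v\in C$ there is a $(k+1)$-subset $D\subseteq C$ containing both, and $D$ is a $(k+1)$-clique of $G'$, hence by hypothesis a clique of $G$, so $uv\in E(G)$. (2) Every fill-in edge (every edge of $E(G')\setminus E(G)$) lies inside a small maximal clique: such an edge lies in some maximal clique $C$ of $G'$, and since that $C$ is not a clique of $G$, fact (1) forces $C$ to be small.

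Next I would replace each node $C_i$ of $T$ by the induced subgraph $G|_{C_i}$. Because every edge of $G'$ lies in a maximal clique, checking edges one at a time shows that gluing the $G|_{C_i}$ along the separators $S_{ij}=C_i\cap C_j$, following $T$, reconstructs exactly $G$. This gluing is an honest clique-sum along each separator that is a clique of $G$; by fact (1) a separator can fail to be a clique of $G$ only when both $C_i$ and $C_j$ are small. I would therefore contract every clique-tree edge whose separator is not a clique of $G$ (all of these join two small cliques), merging the incident pieces. A big clique is incident to no such edge, so it survives as its own piece $G|_{C_i}=C_i$, a clique; every other piece is $G|_U$ with $U=\bigcup_{C\in T'}C$ for a subtree $T'$ of small cliques. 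After the contraction every remaining separator is a clique of $G$ and sits inside both neighbouring pieces, so $G$ is a genuine clique-sum of these pieces.

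The step I expect to be the main obstacle is bounding the treewidth of a merged piece $G|_U$ by $k$; everything else is bookkeeping. Since $G'|_U$ is a chordal supergraph of $G|_U$, by the treewidth characterization recalled in Section~\ref{Definitions} it suffices to show $\omega(G'|_U)\le k$. Take any clique $Q\subseteq U$ of $G'$. The nodes of $T$ whose cliques contain $Q$ form a subtree $T_Q=\bigcap_{v\in Q}T_v$, where $T_v$ is the subtree of cliques containing $v$. Because $Q\subseteq U$, each $T_v$ meets $T'$; because $Q$ is a clique, the $T_v$ pairwise intersect. By the Helly property of subtrees of a tree, the family $\{T'\}\cup\{T_v:v\in Q\}$ has a common node, i.e.\ $T'\cap T_Q\neq\emptyset$. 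Hence some small clique $C\in T'$ contains $Q$, so $|Q|\le|C|\le k$. Thus $\omega(G'|_U)\le k$ and the treewidth of $G|_U$ is at most $k-1\le k$, completing the decomposition of $G$ into cliques and graphs of treewidth $\le k$.
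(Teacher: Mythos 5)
Your proof is correct, and although it starts from the same place as the paper's --- a clique tree of the chordal supergraph together with the observation that every sufficiently large maximal clique of $G'$ is forced by the $(k+1)$-clique condition to be a clique of $G$ --- it finishes by a genuinely different mechanism. The paper uses the clique tree only to establish a trichotomy (either $G$ has treewidth $\leq k$, or $G$ is a clique, or $G$ has a clique cutset) and then produces the decomposition by induction on $|V(G)|$, splitting at a clique cutset and observing that each resulting piece inherits a $(k+1)$-clique chordal supergraph. You instead build the decomposition in a single pass: contract exactly those clique-tree edges whose separators fail to be cliques of $G$ (correctly observing that such an edge can only join two small maximal cliques), and then bound the clique number of each merged piece inside $G'$ via the Helly property of subtrees of a tree. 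Your route buys an explicit, non-inductive description of the summands and, incidentally, a slightly sharper bound (treewidth $\leq k-1$ for the non-clique pieces, because you set the smallness threshold at $k$ rather than $k+1$); the cost is the extra Helly argument, which the paper's recursion lets it avoid. If you polish this, the one claim worth writing out rather than calling bookkeeping is that gluing the merged pieces along the surviving separators really does express $G$ as a sequential clique-sum: this rests on the standard adhesion identity for tree decompositions, namely that the union of the bags on one side of a tree edge meets the union of the bags on the other side exactly in the intersection of the two bags at that edge.
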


\begin{proof}
    Let $H$ be a $(k+1)$-clique chordal supergraph of $G$. It has a tree decomposition $T$ each of whose bags is a clique. Moreover, a bag has $>k+1$ members if and only if its clique is also a clique in $G$. Since $G\subseteq H$, the vertex assignments of $T$ can be duplicated for a tree decomposition $T'$ of $G$; each bag has $\leq k+1$ members or is a clique of $G$. 
    
    If all bags of $T'$ have $\leq k+1$ members, then by definition $G$ has treewidth $\leq k$. 
    
    Otherwise, let $B_0$ a clique of size $>k+1$ that is a bag of $T'$. If $B_0$ is the only bag, we're done. Otherwise, it has a neighbor $B_1$. If $B_0\neq B_0 \cap B_1\neq B_1$, then $B_0\cap B_1$ is a cut-clique of $G$. If $B_0\subseteq B_1$ or $B_1\subseteq B_0$, we can merge the bags. By induction, $G$ has a cut-clique or is a clique.
    
    We have shown that if $G$ has a $(k+1)$-clique chordal supergraph, then at least one of the following holds:
    \begin{itemize}
        \item $G$ has treewidth $\leq k$.
        \item $G$ is a clique.
        \item $G$ has a clique cutset.
    \end{itemize}
    In the first two cases, $G$ is trivially an appropriate clique-sum. These (or $K_1$) can serve as the base case for the induction we use to deal with the third possibility. Our inductive hypothesis is that the theorem holds for all graphs with fewer than $|V(G)|$ vertices. 
    
    If $G$ has a $(k+1)$-clique chordal supergraph, so does every induced subgraph. Let $K$ be a clique cutset of $G$, and consider the induced subgraphs $G_i$ on vertex-sets of the form $V(K)\cup V(C_i)$ for $C_i$ the components of $G\backslash K$. By the inductive hypothesis, each of $G_i$ is the clique-sum of cliques and series-parallel graphs. In turn, $G$ is the clique-sum of those $G_i$, so $G$ is the clique-sum of cliques and series-parallel graphs, and the proof is complete.
    \end{proof}
        
\begin{proof}[\rm \bf Proof of Theorem \ref{originalequivalence}]
    The paragraphs below will leverage a lemma from \cite{LMT} and our own Lemmata 3.1-3.4
    to help prove $(\neg 2) \rightarrow (\neg \alpha), (\neg \alpha) \rightarrow (\neg 1)$, $(\neg A) \rightarrow (\neg 2)$, $(\neg 1) \rightarrow (\neg 2)$, and $(\neg \alpha) \rightarrow (\neg A)$. 
    This gives us $(1)\leftrightarrow (2) \leftrightarrow (\alpha) \leftrightarrow (A)$. As for $(B)$ and $(3)$, the original proof of $(A)\rightarrow (B) \rightarrow (3)$ is too good to change, so we just append $(3)\rightarrow (A)$, which is a special case of Lemma \ref{3kimpliesAk}. Each paragraph below is labeled with the implication proven therein.

    $(\neg 2) \rightarrow (\neg \alpha)$. To rephrase Lemma 2.2 of \cite{LMT}, if a graph contains no induced subgraph equal to or built from $K_4$ but still has a $K_4$ minor, it contains a prism, wheel, or $K_{3,3}$ as an induced subgraph. So if some induced $H\subseteq G$ has a $K_4$ minor but contains no $K_4$ subgraph, it is not $\mathcal{F}$-free, which implies that $G$ itself is not $\mathcal{F}$-free. 

    $(\neg \alpha) \rightarrow (\neg 1)$. Assume $G$ contains an induced $F\subseteq G$ with $F\in \mathcal{F}$. If that $F$ is equal to or built from $\widehat{W}_4$, we have $(\neg 1)$. Otherwise, that $F$ is a prism, wheel, or $K_{3,3}$, and Lemma \ref{Fiswheelfounded} yields $(\neg 1)$.

    $(\neg A) \rightarrow (\neg 2)$. If every induced subgraph of $G$ is series-parallel, is a clique, or has a cut-clique, then $G$ is the clique-sum of cliques and series-parallel graphs. So if $G$ is not the clique-sum of cliques and series-parallel graphs, it contains some induced subgraph $G'$ failing all three of those conditions. By Lemma \ref{Onlyhardlemma}, this $G'$ contains an induced subgraph $H$ that is not series-parallel nor contains a $K_4$ subgraph, so $G$ does as well. 

    $(\neg 1) \rightarrow (\neg 2)$. Check that wheels have $K_4$ minors and contain no $K_4$ subgraphs, then use Lemma \ref{WheelsareF}.
    
    $(\neg \alpha) \rightarrow (\neg A)$. Assume for contradiction that $G$ is a clique-sum of cliques and series-parallel graphs, minimal among those that are not $\mathcal{F}$-free. It cannot be a clique, since no induced subgraph of a clique lies in $\mathcal{F}$. By Lemma \ref{Mostboringlemma} and the monotonicity of treewidth, it cannot be series-parallel. 
    Assume it has a clique cutset $K$. 
    Therefore, it must have a clique cutset $K$. Let $\{C_i\}$ be the components of $G\backslash K$. Let $H\in \mathcal{F}$ be an induced subgraph of $G$. By the minimality assumption, there is no $G|_{V(C_i)\cup K}$  with $H\subseteq G|_{V(C_i)\cup K}$. Thus, multiple $C_i$ must contain vertices of $H$. Since $H$ is connected, $K\cap H$ must be a cutset of $H$, and since $H$ is an induced subgraph of $G$, $K\cap H$ is a clique in $H$. By Lemma \ref{Mostboringlemma}, this contradiction concludes the proof. 
    
    $(3)\rightarrow (A)$. Recall that series-parallel graphs are exactly the graphs of treewidth $\leq 2$. Then consider the case $k=3$ of Lemma \ref{3kimpliesAk}.
\end{proof}


\section{Generalizing}\label{Generalizing}
Here we prove Theorem \ref{TheKversion}. Notice that Lemma \ref{3kimpliesAk} is $(3_k)\rightarrow (A_k)$. We could prove $(A_k)\rightarrow (B_k)\rightarrow (3_k)$ by a straightforward generalization of \cite{JM}'s proof that $(A)\rightarrow (B) \rightarrow (3)$, but for novelty's sake, we follow another pattern. 
\begin{lmm}\label{3kimpliesBk}
    Let $G'$ be a $(k+1)$-clique chordal supergraph of $G$, and let $\pi$ be any perfect elimination ordering for $G'$. Then $\pi$ is a $k$-blended elimination ordering for $G$. 
\end{lmm}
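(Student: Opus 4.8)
The plan is to reduce the claim to a standard property of elimination orderings: the fill-in $G_\pi$ is contained in any chordal completion of $G$ for which $\pi$ is a perfect elimination ordering, and $G'$ is such a completion. Concretely, I would first show $E(G_\pi)\subseteq E(G')$ (the two graphs share the vertex set $V(G)=V(G')$). Granting this, every higher-numbered neighbor of $v_i$ in $G_\pi$ is also one in $G'$, so $N^+_\pi(v_i)\subseteq N^+_{G'}(v_i)$, where $N^+_{G'}(v_i)$ denotes the higher-numbered neighbors of $v_i$ in $G'$. Because $\pi$ is a perfect elimination ordering of $G'$, each $N^+_{G'}(v_i)$ is a clique of $G'$; hence $\{v_i\}\cup N^+_\pi(v_i)$ is a clique of $G'$ for every $i$.

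To establish $E(G_\pi)\subseteq E(G')$, I would induct along the elimination game defining $G_\pi$. Let $G=G^{(0)},G^{(1)},\dots,G^{(n)}=G_\pi$ be the graphs obtained by successively processing $v_1,\dots,v_n$, and take as invariant that $E(G^{(i)})\subseteq E(G')$. The base case $E(G)\subseteq E(G')$ holds since $G'$ is a supergraph of $G$. For the inductive step, any fill edge $v_jv_k$ created while processing $v_i$ comes from a configuration $v_j\sim v_i\sim v_k$ with $j,k>i$ in the current graph $G^{(i-1)}$; by hypothesis $v_iv_j,v_iv_k\in E(G')$, so $v_j,v_k\in N^+_{G'}(v_i)$, which is a clique of $G'$ by perfect elimination, giving $v_jv_k\in E(G')$. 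Thus no step leaves $E(G')$, and $E(G_\pi)\subseteq E(G')$ follows. I expect this containment to be the crux; the remainder is a size count.

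Finally I would run a dichotomy on $|N^+_\pi(v_i)|$ for each $i$, matching the two alternatives in the definition of a $k$-blended elimination ordering. If $|N^+_\pi(v_i)|\leq k$, alternative (ii) holds immediately. If $|N^+_\pi(v_i)|\geq k+1$, then $\{v_i\}\cup N^+_\pi(v_i)$ is a clique of $G'$ on at least $k+2$ vertices. Given any two $u,w\in N^+_\pi(v_i)$, I can enlarge $\{u,w\}$ to a $(k+2)$-element subset $S$ of this clique (at least $k$ further vertices are available); $S$ is a $(k+2)$-clique of $G'$, and since $G'$ is a $(k+1)$-clique chordal supergraph every $(k+2)$-clique of $G'$ is already a clique of $G$, so $uw\in E(G)$. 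As $u,w$ were arbitrary, $N^+_\pi(v_i)$ is complete in $G$, which is alternative (i). Since one alternative holds at each vertex, $\pi$ is a $k$-blended elimination ordering of $G$, as desired.
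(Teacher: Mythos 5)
Your proposal is correct and follows the same route as the paper, whose entire proof is the observation that $G_\pi$ is a subgraph of $G'$ followed by an appeal to the definitions; your inductive verification that $E(G_\pi)\subseteq E(G')$ and the dichotomy on $|N^+_\pi(v_i)|$ are exactly the details the paper leaves implicit. No gaps.
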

\begin{lmm}\label{Bkimplies3k}
    Let $\pi$ be a $k$-blended elimination ordering for $G$. Then $G_\pi$ is a $(k+1)$-clique chordal supergraph of $G$.  
\end{lmm}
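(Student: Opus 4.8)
The plan is to verify the three defining properties of a $(k+1)$-clique chordal supergraph in turn. That $V(G_\pi)=V(G)$ and $E(G)\subseteq E(G_\pi)$ is immediate, since the fill-in procedure only ever adds edges. So the two substantive tasks are to show that $G_\pi$ is chordal and that it contains no $(k+1)$-clique absent from $G$.

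For chordality I would show directly that $\pi$ is a perfect elimination ordering of $G_\pi$, that is, that $N^+_\pi(v_i)$ is a clique of $G_\pi$ for every $i$. The key observation is that every edge of $G_\pi$ incident to $v_i$ is already present once step $i$ begins: an edge added while processing $v_j$ joins two vertices both numbered strictly above $j$, so any edge touching $v_i$ is either original or was added at a step $j<i$. Consequently, when we process $v_i$ the set of its higher-numbered neighbors is already exactly $N^+_\pi(v_i)$, and the construction explicitly makes that set complete; hence $N^+_\pi(v_i)$ is a clique of $G_\pi$, so $\pi$ is a perfect elimination ordering and $G_\pi$ is chordal.

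For the clique condition I would argue through the $\pi$-least vertex of a clique. Let $S$ be any clique of $G_\pi$ and let $v_\ell$ be its lowest-numbered vertex; since $\pi$ is a perfect elimination ordering, $S\setminus\{v_\ell\}\subseteq N^+_\pi(v_\ell)$, so $|N^+_\pi(v_\ell)|\ge |S|-1$. Now I invoke the $k$-blended hypothesis at $v_\ell$. In case (ii), the bound $|N^+_\pi(v_\ell)|\le k$ limits $|S|$, so $S$ is too small to be a new $(k+1)$-clique. In case (i), $N^+_\pi(v_\ell)$ is complete in $G$, so the vertices of $S\setminus\{v_\ell\}$ are pairwise adjacent in $G$; provided $v_\ell$ is itself joined in $G$ to each of those neighbors, the whole of $S\subseteq\{v_\ell\}\cup N^+_\pi(v_\ell)$ is a clique already present in $G$. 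Either way $G_\pi$ introduces no $(k+1)$-clique that $G$ lacks.

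The proviso in case (i) is exactly where I expect the real work to lie. The fill-in may add edges incident to the lowest vertex $v_\ell$, so knowing merely that the higher neighbors $N^+_\pi(v_\ell)$ are pairwise adjacent in $G$ does not by itself force $v_\ell$ to be $G$-adjacent to all of them; ruling out such a fill edge at the base of the clique is the crux, and it is precisely what the blended condition at $v_\ell$ must be read to guarantee. To make this airtight I would, if necessary, recast the argument as an induction that eliminates $v_1$ first: no fill edge can be incident to the first-processed vertex, so its $G_\pi$-neighborhood equals its $G$-neighborhood $A=N_G(v_1)$. Completing $A$ turns $G$ into a graph whose tail ordering $(v_2,\dots,v_{|V(G)|})$ is again $k$-blended (the higher-neighbor sets are unchanged, and completeness in $G$ only becomes easier once $A$ is filled), so the inductive hypothesis controls every clique avoiding $v_1$, while the blended condition at $v_1$ controls those containing it. Checking that completing $A$ cannot manufacture a clique exceeding what the blended threshold permits is the one place care is genuinely required.
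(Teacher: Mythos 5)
Your skeleton is the right one, and it is essentially the argument the paper leaves implicit (its own proof of this lemma is the single line ``see definition''): check the supergraph conditions, show that $\pi$ is a perfect elimination ordering of $G_\pi$ so that $G_\pi$ is chordal, and then control every clique $S$ of $G_\pi$ through its $\pi$-least vertex $v_\ell$ via the blended dichotomy. Your chordality argument is correct, and so is case (ii); note only that a $(k+1)$-clique chordal supergraph is required to have no new $(k+2)$-cliques, so your bound $|S|\le k+1$ is exactly what is needed, despite your phrase ``new $(k+1)$-clique.''

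The proviso you flag in case (i) is, however, a genuine gap, and your inductive repair does not close it: after completing $A=N_G(v_1)$ and passing to $G'$, the inductive hypothesis only says that large cliques of $G_\pi\setminus v_1$ are cliques of $G'$, not of $G$, which is exactly the strengthening you still owe. In fact, with condition (i) read literally the lemma is false. Take $k=2$, $V(G)=\{v_1,\dots,v_5\}$, $E(G)=\{v_1v_2,\,v_1v_5,\,v_2v_3,\,v_2v_4,\,v_3v_4,\,v_3v_5,\,v_4v_5\}$, and $\pi=(v_1,\dots,v_5)$. The only fill edge is $v_2v_5$ (added while processing $v_1$); then $|N^+_\pi(v_1)|=2$, $N^+_\pi(v_2)=\{v_3,v_4,v_5\}$ is complete in $G$, and every later vertex satisfies (ii), so $\pi$ is $2$-blended; yet $\{v_2,v_3,v_4,v_5\}$ is a $K_4$ of $G_\pi$ that is not a $K_4$ of $G$. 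The resolution is the one you half-guessed: condition (i) must be read as requiring $\{v_i\}\cup N^+_\pi(v_i)$ to be complete in $G$ (equivalently, that $v_i$ is $G$-adjacent to each member of $N^+_\pi(v_i)$). That stronger property is what Lemma \ref{3kimpliesBk} actually produces, since there $N^+_\pi(v_i)$ lies inside the $G'$-clique $\{v_i\}\cup N^+_{G'}(v_i)$, which is a clique of $G$ whenever it has more than $k+1$ vertices. Under that reading your direct case-(i) argument closes immediately and the inductive detour is unnecessary.
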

\begin{lmm}\label{Akimplies3k}
    A clique-sum of cliques and graphs with treewidth $\leq k$ has a $(k+1)$-clique chordal supergraph.
\end{lmm}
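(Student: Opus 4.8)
The plan is to induct on the number of summands and reduce the problem to a single clique-sum operation, relying on two facts: that a clique-sum of chordal graphs along a clique is again chordal, and that a clique cutset prevents any clique from straddling the two sides. The definition forbids only $(k+2)$-cliques that are \emph{not} already present in $G$, and this is exactly what makes large clique summands harmless.

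First I would dispatch the base case, verifying that each individual summand has a $(k+1)$-clique chordal supergraph. A clique $K_m$ is chordal and is its own supergraph; although it contains $(k+2)$-cliques when $m \geq k+2$, every such clique is contained in $K_m$ itself, so none is new, and $K_m$ is therefore its own $(k+1)$-clique chordal supergraph. A graph $H$ of treewidth $\leq k$ has, by the chordal-supergraph characterization of treewidth recalled in Section \ref{Definitions}, a chordal supergraph $H'$ with $\omega(H') \leq k+1$; since $H'$ has no $(k+2)$-clique whatsoever, it is vacuously a $(k+1)$-clique chordal supergraph of $H$.

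The key step is a single clique-sum. Suppose $G$ is the clique-sum of $A$ and $B$ along a common clique $S$, and let $A'$, $B'$ be $(k+1)$-clique chordal supergraphs of $A$, $B$. Since $S$ is a clique in $A$ and in $B$, it remains a clique in $A'$ and $B'$, so I can form $G'$, the clique-sum of $A'$ and $B'$ along $S$. Then $G'$ has vertex set $V(G)$ and contains every edge of $G$, and $G'$ is chordal because the clique-sum of two chordal graphs along a clique is chordal: any induced cycle of length $\geq 4$ either lies within one side, where it has a chord, or crosses $S$, in which case it meets $S$ in two nonconsecutive vertices, whose edge is a chord. It remains to control cliques. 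Because $S$ is a clique cutset of $G'$ with no edges between $V(A')\setminus S$ and $V(B')\setminus S$, every clique of $G'$ lies entirely in $A'$ or entirely in $B'$. Hence any $(k+2)$-clique $C$ of $G'$ is a $(k+2)$-clique of, say, $A'$; by hypothesis $C$ is contained in $A$, and since every edge of $A$ is an edge of $G$, also in $G$. Thus $G'$ has no $(k+2)$-clique outside $G$, making it a $(k+1)$-clique chordal supergraph of $G$. Finally, I would induct on the number $n$ of summands: the last clique-sum forming $G$ glues two graphs, each a clique-sum of a strictly smaller sub-collection, so the inductive hypothesis furnishes $(k+1)$-clique chordal supergraphs of both, and the single-clique-sum step produces one for $G$.

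I expect the main obstacle to be the clique-control argument in the single clique-sum step, namely establishing cleanly that no clique of $G'$ straddles the cutset $S$ and that ``$C$ is not new in $A'$'' upgrades correctly to ``$C$ is not new in $G$.'' The chordality-preservation fact is standard and the reduction to one operation is routine bookkeeping, but the interplay between the $(k+2)$-clique bound and the clique-sum structure is where the reasoning must be made precise.
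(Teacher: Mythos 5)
Your proof is correct and follows the same strategy as the paper's: replace each partial-$k$-tree summand by a chordal supergraph of clique number at most $k+1$, keep the clique summands as they are, and reassemble along the same cut-cliques. The paper's version is only a four-sentence sketch that asserts the reassembled graph works; your write-up supplies exactly the details it omits (the induction on summands, chordality of a clique-sum of chordal graphs, and the observation that no clique of the reassembled graph straddles the cutset, so every new $(k+2)$-clique already lives in one summand and hence in $G$).
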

\begin{proof}[\rm \bf Proof of Theorem \ref{TheKversion}]
    Lemma \ref{3kimpliesAk} and Lemma \ref{Akimplies3k} together constitute $(A_k)\leftrightarrow (3_k)$. Lemma 4.1 implies that if a graph has a $(k+1)$-clique chordal supergraph, then it has a $k$-blended elimination ordering. Lemma 4.2 implies the converse. Together, they imply $(3_k)\leftrightarrow (B_k)$. 
\end{proof}

\begin{proof}[Proof of Lemma \ref{3kimpliesBk}]
    Notice that $G_\pi$ is a subgraph of $G'$. Then our conclusion follows from the definitions of $k$-blended elimination ordering and $(k+1)$-clique supergraph. 
\end{proof}

\begin{proof}[Proof of Lemma \ref{Bkimplies3k}] See definition of $k$-blended elimination ordering. 
\end{proof}

\begin{proof}[Proof of Lemma \ref{Akimplies3k}]
    Consider the cut-clique decomposition of some $G$ into cliques and partial $k$-trees. The latter have chordal supergraphs with maximum clique size $k+1$. Take these chordal supergraphs. Reassembling $G$ from this cut-clique decomposition only requires the identification of existing cliques with each other; this is a $(k+1)$-clique chordal supergraph of $G$.
\end{proof}

\section{Acknowledgements}
The first author is partially supported by NSF-EPSRC Grant DMS-2120644 and by AFOSR grant FA9550-22-1-0083. Both authors thank the generous, meticulous reviewers.

\end{document}